\documentclass[reqno]{amsart}
\title{Uncertainty Principles on Nilpotent Lie groups}

\usepackage{amsmath,amssymb,amsthm,amsrefs,mathrsfs}
\numberwithin{equation}{section}
\theoremstyle{definition}

\newtheorem{thm}{\bf Theorem}[section]

\newtheorem{lem}[thm]{\bf Lemma}
\newtheorem{rem}[thm]{\sc Remark}
\newtheorem{prop}[thm]{\bf Proposition}

\newcommand{\R}{\mathbb{R}}

\newcommand{\h}{\mathbb{H}}

\newcommand{\C}{\mathbb{C}}

\newcommand{\g}{\mathfrak{g}}
\newcommand{\U}{\mathscr{U}}
\newcommand{\ch}{\mathcal{H}}

\newcommand{\W}{\mathcal{W}}
\newcommand{\B}{\mathcal{B}}

\DeclareMathOperator*{\tr}{tr}

\allowdisplaybreaks
\begin{document}
\begin{abstract}
Hardy's type uncertainty principle on connected nilpotent Lie groups for the Fourier transform is proved. An analogue of Hardy's theorem for Gabor transform has been established for connected and  simply connected nilpotent Lie groups . Finally Beurling's theorem  for Gabor transform is  discussed for  groups of the form $\mathbb{R}^n \times K$, where $K$ is a compact group.
\end{abstract}
\author[Jyoti Sharma]{JYOTI SHARMA}
\address{Department of Mathematics, University of Delhi, Delhi, 110007, India.}
\email{jsharma3698@gmail.com}

\author[Ajay Kumar]{AJAY KUMAR$^\ast$}
\address{Department of Mathematics, University of Delhi, Delhi, 110007, India.}
\email[Corresponding author]{akumar@maths.du.ac.in}
\thanks{$^\ast$Corresponding author, E-mail address: akumar@maths.du.ac.in}
\keywords{Hardy's type theorem, Fourier transform, Beurling theorem, Continuous Gabor transform, Nilpotent Lie group.}
\subjclass[2010]{Primary 43A32; Secondary 22D99; 22E25}
\maketitle

\section{Introduction}
One of the uncertainty  principles states that a non-zero integrable function $f$ on $\R$ and its Fourier transform $\widehat{f}$ cannot both simultaneously decay rapidly. For $f \in L^1(\R)$, the Fourier transform $\widehat{f}$ on $ \mathbb{R} $ is given by
\begin{flalign*}
\widehat{f}(\xi)=\int_{\R}{f(x)\ e^{-2\pi i \xi x}}\ dx.
\end{flalign*}
The following theorem of Hardy (see \cite{Har:33}) makes the above statement more precise: 
\begin{thm}(Hardy)
Let $f$ be a measurable function on $\R$ such that 
\renewcommand{\labelenumi}{(\roman{enumi})}
\begin{enumerate}
\item $|f(x)| \leq Ce^{-a\pi x^2}$, for all $x \in \R$, 
\item $|\widehat{f}(\xi)| \leq Ce^{-b\pi \xi^2}$, for all $\xi \in \R$,
\end{enumerate}
where $a$, $b$ and $C$ are positive constants. If $ab>1$, then $f=0$ a.e.
\end{thm}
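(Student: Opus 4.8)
The plan is to follow the classical complex-analytic route: use the Gaussian decay of $f$ to extend $\widehat f$ to an entire function of order at most $2$, then use the Gaussian decay of $\widehat f$ on $\mathbb{R}$ together with a Phragm\'en--Lindel\"of argument to force $\widehat f$ to be a constant multiple of a Gaussian, and finally invoke the strict inequality $ab>1$ to conclude that this constant vanishes.

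Two preliminary reductions help. A dilation $f(x)\mapsto f(\lambda x)$ sends $(a,b)$ to $(a\lambda^{2},\,b\lambda^{-2})$ and leaves the product $ab$ unchanged, so a suitable choice of $\lambda$ reduces us to the case $a=b=:c$ with $c>1$. Splitting $f$ into its even and odd parts, each of which again satisfies hypotheses (i) and (ii) (with a possibly larger constant), we may also assume $f$ is even; the odd case is identical after dividing by $z$ at the step below. Since $|f(x)|\le Ce^{-\pi c x^{2}}$, the integral $\int f(t)e^{-2\pi izt}\,dt$ converges for every $z\in\mathbb{C}$ and defines an entire extension of $\widehat f$ satisfying $|\widehat f(x+iy)|\le Ce^{\pi y^{2}/c}$ for all $x,y\in\mathbb{R}$ (by completing the square in the exponent).

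Now consider $g(z)=e^{\pi c z^{2}}\widehat f(z)$, an entire function. It is bounded by $C$ on $\mathbb{R}$ (by (ii)) and on $i\mathbb{R}$ (by (i), via the bound just derived), and it satisfies $|g(z)|\le Ce^{\pi c(\operatorname{Re}z)^{2}}$ everywhere. Since $f$, and hence $g$, is even, write $g(z)=G(z^{2})$ with $G$ entire. Transporting these three facts through $w=z^{2}$ (using $(\operatorname{Re}z)^{2}=\tfrac12(|w|+\operatorname{Re}w)$) shows that $G$ is of exponential type, that $|G(w)|\le Ce^{\frac{\pi c}{2}(|w|+\operatorname{Re}w)}$, and that $|G|\le C$ on all of $\mathbb{R}$. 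The exponential bound says the indicator of $G$ satisfies $h_{G}(\theta)\le\frac{\pi c}{2}(1+\cos\theta)$, so the indicator diagram of $G$ lies in a closed disc tangent to the imaginary axis at the origin; but an entire function of exponential type that is bounded on $\mathbb{R}$ has its indicator diagram equal to a segment of the imaginary axis (Phragm\'en--Lindel\"of). Hence the diagram is $\{0\}$, i.e.\ $G$ has exponential type $0$; being bounded on $\mathbb{R}$ as well, it is bounded on each half-plane (Phragm\'en--Lindel\"of) and so constant by Liouville. Thus $g$ is constant, $\widehat f(z)=Ke^{-\pi c z^{2}}$; in particular $\widehat f(\xi)=Ke^{-\pi c\xi^{2}}$ on $\mathbb{R}$, and since $f\in L^{1}$, uniqueness of the Fourier transform gives $f(x)=K'e^{-\pi x^{2}/c}$ a.e. Substituting into (i), the inequality $|K'|e^{-\pi x^{2}/c}\le Ce^{-\pi c x^{2}}$ for all $x$ forces $K'=0$ because $c>1$; hence $f=0$ a.e.

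The crux, and the main obstacle, is the Phragm\'en--Lindel\"of step. On the four quadrants, where the sector form of the principle would be the natural tool, $g$ has growth of order exactly $2$ against an opening angle $\pi/2$ --- the borderline case, in which the sector form genuinely fails and cannot be rescued by inserting a factor $e^{\varepsilon z^{2}}$. Passing to the variable $w=z^{2}$, which is legitimate precisely because $g$ is even, halves the effective order and makes the half-plane version of Phragm\'en--Lindel\"of (and the indicator-diagram description of functions bounded on a line) available; everything else is routine. Minor additional care is needed only for the odd part --- there one works with $g(z)/z=\Gamma(z^{2})$ and arrives at $\widehat f(z)=Kze^{-\pi c z^{2}}$, which likewise violates (i) unless $K=0$.
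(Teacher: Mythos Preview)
The paper does not contain a proof of this statement. Theorem~1.1 is quoted in the introduction as the classical result of Hardy, with a citation to \cite{Har:33}, and is then used as a black box throughout (e.g.\ at the end of the proof of Theorem~\ref{simp} and in Theorem~4.2). There is therefore nothing in the paper to compare your argument against.

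That said, your proposal is essentially Hardy's own complex--analytic proof: extend $\widehat f$ to an entire function via the Gaussian decay of $f$, normalise by a Gaussian factor, exploit the even/odd symmetry to pass from order~$2$ to exponential type via $w=z^{2}$, and conclude constancy by a Phragm\'en--Lindel\"of argument. The strategy is correct and standard. One point deserves tightening: the sentence ``an entire function of exponential type that is bounded on $\mathbb{R}$ has its indicator diagram equal to a segment of the imaginary axis'' is not a theorem one can simply cite in that form; what you actually need is that $G$, being of exponential type with $h_G(\theta)\le\frac{\pi c}{2}(1+\cos\theta)$ and bounded on the real axis, is bounded in each half-plane (apply Phragm\'en--Lindel\"of on $\{\mathrm{Im}\,w\ge 0\}$ and $\{\mathrm{Im}\,w\le 0\}$ to $e^{i\lambda w}G(w)$ for suitable $\lambda$, or argue directly from the indicator bound that $h_G(\pm\pi/2)\le 0$ forces boundedness on vertical lines). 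Once that is in place, Liouville finishes the job exactly as you say. The odd case via $g(z)/z$ is fine since $g(0)=\widehat f(0)=0$ when $f$ is odd.
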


\noindent Several analogues of the above result have been proved in the setting of $\R^n$, Heisenberg group $\h_n$, Heisenberg motion group $\mathbb{H}_n \ltimes K$, locally compact abelian groups, several  classes of solvable Lie groups, Euclidean motion group and nilpotent Lie groups (see \cite{ Bak:16,Bak:Kan:08, Bak:Kan:10,Kan:Kum:01,Par:Tha:09,Sar:Tha:05, Sit:Sun:Tha:95}). 
A generalization of the above result is as follows:
\begin{thm}{(Beurling)} Let $f$ be a square integrable function satisfying
\begin{align*}
\int_{\mathbb{R}}\int_{\mathbb{R}}|f(x)|\ |\widehat{f}(\xi)| e^{2\pi|x\cdot \xi|}\ dx\ d\xi < \infty.
\end{align*}
Then $f = 0$ a.e.
\end{thm}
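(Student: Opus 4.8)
The plan is to argue along the classical complex-analytic lines of H\"ormander, as streamlined by Bonami, Demange and Jaming. Assume $f\not\equiv 0$ and seek a contradiction. Since $e^{2\pi|x\xi|}\ge 1$, the hypothesis forces $\big(\int_{\R}|f|\big)\big(\int_{\R}|\widehat f|\big)<\infty$; as $f\not\equiv 0$ implies $\widehat f\not\equiv 0$, both factors are finite, so $f,\widehat f\in L^1(\R)\cap L^2(\R)$, $f$ is continuous, and the inversion formula holds pointwise. By Tonelli's theorem $\int_{\R}|f(x)|e^{2\pi|x\xi|}\,dx<\infty$ for almost every $\xi$; since this quantity increases with $|\xi|$ while the set of admissible $\xi$ has full measure and is therefore unbounded, one obtains $\int_{\R}|f(x)|e^{2\pi c|x|}\,dx<\infty$ for every $c>0$, and symmetrically $\int_{\R}|\widehat f(\xi)|e^{2\pi c|\xi|}\,d\xi<\infty$ for every $c>0$. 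Splitting $f$ into its even and odd parts, each of which again satisfies the hypothesis because the weight $e^{2\pi|x\xi|}$ is invariant under $x\mapsto-x$ and $\xi\mapsto-\xi$, one may furthermore assume $f$, hence $\widehat f$, even.

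This decay allows one to define
\[
\widehat f(\zeta)=\int_{\R}f(x)\,e^{-2\pi i x\zeta}\,dx,\qquad f(\zeta)=\int_{\R}\widehat f(\xi)\,e^{2\pi i\xi\zeta}\,d\xi\qquad(\zeta\in\C),
\]
entire functions extending the original ones, for which
\[
|\widehat f(\sigma+i\tau)|\le\varphi(|\tau|),\qquad |f(\sigma+i\tau)|\le\psi(|\tau|),
\]
where $\varphi(\tau)=\int_{\R}|f(x)|e^{2\pi\tau|x|}\,dx$ and $\psi(\tau)=\int_{\R}|\widehat f(\xi)|e^{2\pi\tau|\xi|}\,d\xi$. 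In these terms the hypothesis says exactly that $\int_{\R}|\widehat f(\xi)|\varphi(|\xi|)\,d\xi<\infty$ (equivalently $\int_{\R}|f(x)|\psi(|x|)\,dx<\infty$): $\widehat f$ decays along the real axis weighted against a function that in turn bounds the growth of $\widehat f$ away from it.

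The heart of the matter is to convert this coupling into rigidity. One shows first that $\widehat f$ is of order at most $2$ — here a Phragm\'en--Lindel\"of argument enters, estimating $\widehat f$ on sectors from the one-sided weighted decay along $\R$ combined with the crude bound $|\widehat f(\sigma+i\tau)|\le\varphi(|\tau|)$ — and then, applying the Hadamard factorization of an even entire function of order $2$ together with the integrability of $|\widehat f|\,\varphi(|\cdot|)$ on $\R$, one concludes that there exist $a>0$ and polynomials $P,Q$ with $\deg P=\deg Q$ such that
\[
\widehat f(\xi)=P(\xi)\,e^{-\pi a\xi^2},\qquad f(x)=Q(x)\,e^{-\pi x^2/a}.
\]
I expect this step to be the main obstacle: it is the genuine analytic core of Beurling's theorem, and if one is content to invoke rather than reprove it, it is precisely H\"ormander's theorem (or its Bonami--Demange--Jaming refinement).

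It remains to check that these Gaussian-type forms are incompatible with the hypothesis unless $f=0$. Substituting them back and using
\[
-\pi x^2/a-\pi a\xi^2+2\pi|x\xi|=-\pi\big(|x|/\sqrt a-\sqrt a\,|\xi|\big)^2,
\]
the hypothesis becomes $\iint_{\R^2}|Q(x)|\,|P(\xi)|\,e^{-\pi(|x|/\sqrt a-\sqrt a\,|\xi|)^2}\,dx\,d\xi<\infty$. On the set $\{(x,\xi):\xi>1/a,\ |x-a\xi|\le 1\}$ one has $x>0$, so the exponent equals $-\pi(x-a\xi)^2/a\ge-\pi/a$ and the exponential is bounded below by $e^{-\pi/a}$; hence the double integral dominates a constant multiple of $\int_{1/a}^{\infty}|P(\xi)|\big(\int_{a\xi-1}^{a\xi+1}|Q(x)|\,dx\big)\,d\xi$. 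If $Q\not\equiv 0$ the inner integral grows at least like a positive power of $\xi$, so finiteness forces $P\equiv 0$ and hence $\widehat f\equiv 0$; in either case $f=0$ a.e., contradicting the standing assumption and completing the proof.
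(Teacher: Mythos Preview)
The paper does not supply a proof of this statement: Theorem~1.2 is quoted in the introduction as the classical Beurling theorem and is used later only as a black box (via the references for $\R^n$ and for nilpotent groups). There is therefore no ``paper's own proof'' to compare against.

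On the merits of your outline: the preliminary reductions (that $f,\widehat f\in L^1$, that $\int|f(x)|e^{2\pi c|x|}\,dx<\infty$ for all $c>0$, the reduction to even $f$, and the entire extensions with the bounds $|\widehat f(\sigma+i\tau)|\le\varphi(|\tau|)$) are all correct and standard. The final verification that a nonzero pair $P(\xi)e^{-\pi a\xi^2}$, $Q(x)e^{-\pi x^2/a}$ cannot satisfy the hypothesis is also correct; a small quibble is that when $\deg Q=0$ the inner integral is merely bounded below by a positive constant rather than growing like a power of $\xi$, but this still forces $\int_{1/a}^\infty|P(\xi)|\,d\xi<\infty$ and hence $P\equiv 0$.

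The genuine gap is exactly where you flag it, and your proposed shortcut is circular. ``H\"ormander's theorem'' in this context \emph{is} Beurling's theorem: H\"ormander's 1991 paper is a proof of precisely the statement you are trying to establish, so invoking it assumes the conclusion. Citing the Bonami--Demange--Jaming refinement instead is formally not circular, since their conclusion (polynomial times Gaussian under a weaker, polynomially weighted hypothesis) is a different statement; but their proof contains H\"ormander's argument as the base case, so you are appealing to a strictly harder theorem to deduce an easier one. If the intention is to give a self-contained proof, the Phragm\'en--Lindel\"of/Hadamard step must actually be carried out; if the intention is only to record that the result is known, a reference to H\"ormander (Ark.\ Mat.\ \textbf{29} (1991), 237--240) suffices and the rest of the argument is unnecessary.
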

\noindent The Beurling's theorem for Fourier transform  has been proved for several classes of nilpotent Lie groups (see \cite{Bak:Sal:08,Par:Sar,Sma1:11,Par:Tha:09}).
 For a detailed survey of the uncertainty principles for  Fourier transform, refer to \cite{Fol:Sit:97}.\\
The transformation of a signal using Fourier transform loses the information about time. Thus, in order to tackle such problems, \textit{a joint time-frequency analysis} was utilized. \textit{Gabor transform} is turned out to be one such tool. The approach used in this technique is cutting the signal into segments using a smooth window-function and then computing the Fourier transform separately on each smaller segment. It results in a two-dimensional representation of the signal. \\  
Let $\psi \in L^2(\R)$ be a fixed function usually called a \textit{window function}. The Gabor transform of a function $f\in L^2(\R)$ with respect to the window function $\psi$ is defined by $G_\psi f : \R \times \widehat{\R} \rightarrow \C$ as
\begin{flalign*}
G_\psi f(t,\xi)=\int_{\R}{f(x)\ \overline{\psi(x-t)}\ e^{-2\pi i \xi x}}\ dx,
\end{flalign*}
for all $(t,\xi)\in \R \times \widehat{\R}$. \\
In this paper, analogues of above uncertainty principles on nilpotent Lie groups for Fourier and Gabor transform have been studied.
Results obtained have been organized as follows: In section $3$, Hardy's type results for Fourier transform have been established for connected nilpotent Lie groups. The next  section deals, with an  analogue of  Hardy's theorem for Gabor transform. In section $5$, we prove Beurling's theorem for Gabor transform  for the groups of the form $\R^n \times K$, where $K$ is a compact group.
 \section{Preliminaries} 
\noindent For  a second countable, unimodular group $G$ of type I,  $dx$ will denote the Haar measure on $G$. Let $\widehat{G}$ be the dual space of G consisting of all irreducible unitary representation of $G$ equipped with Plancherel measure $d\pi$. For $f\in L^1\cap L^2(G)$, the Fourier transform $\widehat{f}$ of $f$ is an operator valued function on $\widehat{G}$ defined as 
\begin{align*}
\widehat{f}(\pi) = \int_{G} f(x)\ \pi(x)^* dx.
\end{align*}
Moreover, by Plancherel theorem \cite{Gro:03}, $\widehat{f}(\pi)$ is a Hilbert-Schmidt operator and satisfies the following 
\begin{align}
\int_{G}|f(x)|^2 dx = \int_{\widehat{G}}\|\widehat{f}(\pi)\|_{\text{HS}}^2\ d\pi.\label{planc}
\end{align}
  For each $(x,\pi)\in G\times \widehat{G}$, we define $\ch_{(x,\pi)}=\pi(x)\text{HS}(\mathcal{H}_\pi),
$ where $\pi(x)\text{HS}(\mathcal{H}_\pi)=\{\pi(x)T:T\in \text{HS}(\ch_\pi)\}$. One can see  that $\ch_{(x,\pi)}$ forms a Hilbert space with the inner product given by
\begin{flalign*}
\langle \pi(x)T,\pi(x)S\rangle_{\ch_{(x,\pi)}}=\tr{(S^\ast T)}=\langle T,S\rangle_{\text{HS}(\ch_\pi)}.
\end{flalign*}
Also, $\ch_{(x,\pi)}=\text{HS}(\ch_\pi)$ for all $(x,\pi)\in G\times \widehat{G}$. Let $\ch^2(G \times \widehat{G})$ denote the direct integral of $\{\ch_{(x,\pi)}\}_{(x,\pi)\in G\times \widehat{G}}$ with respect to the product measure $dx\ d\pi$. $\ch^2(G \times \widehat{G})$ forms a Hilbert space with the inner product given by
\begin{flalign*}
\langle F,K\rangle_{\ch^2(G\times \widehat{G})}=\int_{G\times \widehat{G}}{\tr{[F(x,\pi)K(x,\pi)^\ast]}}\ dx\ d\pi.
\end{flalign*}
Let $f\in C_c(G)$, the space of all continuous complex-valued functions on $G$ with compact support, and let $\psi$ be a fixed function in $L^2(G)$. For $(x,\pi) \in G\times \widehat{G}$, the continuous \textit{Gabor Transform} of $f$ with respect to the window function $\psi$ can be defined as a measurable field of operators on $G\times \widehat{G}$ by
\begin{flalign}
&&G_\psi f(x,\pi)&:=\int_{G}{f(y)\ \overline{\psi(x^{-1}y)}\ \pi(y)^\ast}\ dy. \label{opvalued}&
\end{flalign}
One can verify that $G_\psi f(x,\pi)$ is a Hilbert-Schmidt operator for all $x \in G$ and for almost all $\pi\in \widehat{G}$. We can extend $G_\psi$ uniquely to a bounded linear operator from $L^2(G)$ into a closed subspace of $\ch^2(G\times \widehat{G})$ which will be denoted by $G_\psi$. As in \cite{Far:Kam:12}, for $f_1, f_2 \in L^2(G)$ and  window functions $\psi_1$ and $\psi_2$, we have
\begin{flalign}
\langle G_{\psi_1}f_1, G_{\psi_2}f_2\rangle = \langle\psi_2,\psi_1 \rangle \langle f_1, f_2 \rangle. \label{GT-norm}
\end{flalign}
  
\section{Nilpotent lie group}
For a connected nilpotent Lie group $G$ with its simply connected covering group $\widetilde{G}$, let $\Gamma$ be a discrete subgroup of $\widetilde{G}$ such that $G=\widetilde{G}/\Gamma$. Denoting $\g$ by the Lie algebra of $G$ and $\widetilde{G}$, 
let $\B=\{X_1,X_2,\ldots,X_n\}$ be a strong Malcev basis of $\g$ through the ascending central series of $\g$. The norm function on $\g$ is defined as the Euclidean norm of $X$ with respect to the basis $\B$. Indeed, for $X=\sum_{j=1}^{n}{x_jX_j} \in \g$ with $x_j\in \R$,
\begin{flalign*}
&&\|X\|&=\Big(\sum_{j=1}^{n}{x_j^2}\Big)^{1/2}.&
\end{flalign*}
Define a `norm function' on $G$ by setting 
\begin{flalign*}
&&\|x\|&=\inf{\{\|X\| : X \in \g\ \text{such that}\ \exp_G{X}=x\}}. &
\end{flalign*}
The composed map,\ 
  $\R^n\rightarrow \g \rightarrow \widetilde{G}$
given by
\begin{flalign*}
&&&(x_1,\ldots,x_n)\rightarrow \sum_{j=1}^{n}{x_jX_j} \rightarrow \exp_{\widetilde{G}}\Big(\sum_{j=1}^{n}{x_jX_j}\Big) &
\end{flalign*}
is a diffeomorphism and maps the Lebesgue measure on $\R^n$ to the Haar measure on $\widetilde{G}$. In this manner, we identify the Lie algebra $\g$,  as a set with $\R^n$. Also, measurable (integrable) functions on $\widetilde{G}$ can be viewed as such functions on $\R^n$.\\
Let $\g^\ast$ be the vector space dual of $\g$ and $\{X_1^\ast,\ldots,X_n^\ast\}$ the basis of $\g^\ast$ which is dual to $\{X_1,\ldots,X_n\}$. Then, $\{X_1^\ast,\ldots,X_n^\ast\}$ is a Jordan-H\"older basis for the coadjoint action of $G$ on $\g^\ast$. We shall identify $\g^\ast$ with $\R^n$ via the map 
\begin{flalign*}
&&&\xi=(\xi_1,\ldots,\xi_n)\rightarrow \sum_{j=1}^{n}{\xi_jX_j^\ast}&
\end{flalign*}
and on $\g^\ast$ we introduce the Euclidean norm relative to the basis $\{X_1^\ast,\ldots,X_n^\ast\}$, i.e.
\begin{flalign*}
&&&\Big\|\sum_{j=1}^{n}{\xi_jX_j^\ast}\Big\| =\Big(\sum_{j=1}^{n}{\xi_j^2}\Big)^{1/2}=\|\xi\|.&
\end{flalign*}
Let $\U$ denote the Zariski open subset of $\g^\ast$ of generic elements under the coadjoint action of $\widetilde{G}$ with respect to the basis $\{X_1^\ast,\ldots,X_n^\ast\}$. Suppose that  $S$ is the set of jump indices, $T =\{1,\ldots,n\}\setminus S$ and $V_T=\R$-span$\{X_i^\ast: i \in T\}$. Then, $\W=\U \cap V_T$ is a cross-section for the generic orbits and $\W$ supports the Plancherel measure on $\widetilde{G}$. Every element of a connected nilpotent Lie group $G$ with non-compact centre  can be uniquely written as $(t,z,y),$ $t\in \mathbb{R}, z\in \mathbb{T}^d $ and $y\in Y, $ where $Y= \exp(\sum_{j=d+2}^{n}\mathbb{R}X_j)$. We now prove a generalization of the result proved in  \cite{Bak:Kan:08}.

\begin{thm}\label{simp}
Let $G$ be a connected nilpotent Lie group with non-compact center and  $f:G \to \mathbb{C}$ be a measurable function satisfying 
\begin{enumerate}
\item[(i)] $|f(t,z,y)| \leq C(1+|t|^2)^{N}e^{-\pi \alpha t^2}\phi(y)$ for  all $(t,z,y)\in G$   and some $\phi \in L^1\cap L^2(Y)$.
\item[(ii)] $\|\pi_{\xi}(f)\|_{\text{HS}} \leq C(1+\|\xi\|^2)^N e^{-\pi \beta \|\xi\|^2}$ for all $\xi \in \W$,
\end{enumerate}
  where $\alpha,\beta$ and $C$ are positive real numbers and $N$ is a non-negative integer. If $\alpha\beta > 1$, then $f=0$ a.e.
\end{thm}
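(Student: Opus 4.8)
The plan is to reduce the problem on $G$ to the classical Euclidean Hardy theorem by slicing along the non-compact central direction $\R$ and analysing how the group Fourier transform sees that direction. First I would use the decomposition $x=(t,z,y)$ with $t\in\R$, $z\in\T^d$, $y\in Y$, and for fixed $(z,y)$ consider the partial function $t\mapsto f(t,z,y)$. Hypothesis (i) gives the Gaussian-type bound $|f(t,z,y)|\le C(1+|t|^2)^N e^{-\pi\alpha t^2}\phi(y)$, which in particular shows that for a.e.\ $(z,y)$ the function $t\mapsto f(t,z,y)$ lies in $L^1\cap L^2(\R)$ and decays like a Gaussian of parameter slightly less than $\alpha$ (the polynomial factor $(1+|t|^2)^N$ can be absorbed at the cost of shrinking $\alpha$ to any $\alpha'<\alpha$, and similarly for $\beta$; since $\alpha\beta>1$ we may still arrange $\alpha'\beta'>1$).

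The central step is to relate the operator norms $\|\pi_\xi(f)\|_{\mathrm{HS}}$ on the cross-section $\W$ to the ordinary one-dimensional Fourier transform of $f$ in the variable $t$. Since $X_1$ spans (a complement of) the non-compact part of the center, the coordinate $\xi_1=\langle\xi,X_1^\ast\rangle$ plays the role of the dual variable to $t$, and the Kirillov orbit picture shows that the $\xi_1$-dependence of $\pi_\xi$ is essentially that of a character $e^{-2\pi i\xi_1 t}$ in the $t$-slot. Concretely, I would exploit the fact that $\pi_\xi(f)$ can be written as the Fourier transform in $t$ of an operator-valued function built from the slices of $f$, so that a lower bound for $\|\pi_\xi(f)\|_{\mathrm{HS}}$ in terms of $|\xi_1|$ forces a Gaussian bound on the partial Fourier transform $\widehat{f}(\cdot,z,y)$ in the first variable. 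Combined with hypothesis (ii), which gives $\|\pi_\xi(f)\|_{\mathrm{HS}}\le C(1+\|\xi\|^2)^N e^{-\pi\beta\|\xi\|^2}\le C(1+\xi_1^2)^N e^{-\pi\beta\xi_1^2}$ along the ray where only $\xi_1$ varies, one obtains for a.e.\ $(z,y)$ a bound $|\widehat{f}(\xi_1,z,y)|\le C_{z,y}(1+\xi_1^2)^N e^{-\pi\beta'\xi_1^2}$.

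Having both (i) and this derived decay estimate on the one-dimensional Fourier side, I would apply the classical Hardy theorem (in the $(1+|t|^2)^N$-polynomially-weighted form, which is standard and reduces to the stated Theorem of Hardy after shrinking the Gaussian parameters) to the function $t\mapsto f(t,z,y)$ for a.e.\ fixed $(z,y)$. Since $\alpha'\beta'>1$, this yields $f(t,z,y)=0$ for a.e.\ $t$, for a.e.\ $(z,y)$; by Fubini, $f=0$ a.e.\ on $G$.

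The main obstacle I anticipate is the middle step: making precise, uniformly over the cross-section $\W$ and in a measurable way over the slices $(z,y)$, the passage from the Hilbert--Schmidt norm of the group Fourier transform $\pi_\xi(f)$ to a genuine pointwise Gaussian bound on the Euclidean Fourier transform in $t$. This requires a careful use of the Kirillov parametrisation of $\widehat G$ adapted to the strong Malcev basis $\B$, the explicit form of the representations $\pi_\xi$ on the central direction, and an argument (essentially one-dimensional, but with operator-valued coefficients) showing that control of $\|\pi_\xi(f)\|_{\mathrm{HS}}$ along the $\xi_1$-axis controls the scalar Fourier transform of each slice. Once this dictionary is set up cleanly — extending the computation of \cite{Bak:Kan:08} from the Heisenberg-type situation to a general connected nilpotent $G$ with non-compact center — the rest is a routine invocation of Fubini and the classical (weighted) Hardy theorem.
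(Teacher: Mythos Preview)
Your proposal has a genuine gap at exactly the place you flag as ``the main obstacle.'' The Hilbert--Schmidt norm $\|\pi_\xi(f)\|_{\mathrm{HS}}$ is an integrated, $L^2$-type quantity over all of $G$: writing $\pi_\xi(t,z,y)=e^{-2\pi i\xi_1 t}\pi_\xi(0,z,y)$, one has
\[
\pi_\xi(f)=\int_{\T^d}\int_Y \widehat{f}(-\xi_1,z,y)\,\pi_\xi(0,z,y)^*\,dz\,dy,
\]
and an upper bound on the HS norm of this operator-valued integral gives no pointwise control of $\widehat{f}(\xi_1,z,y)$ for individual $(z,y)$. There is no mechanism in Kirillov theory that turns the bound in (ii) into the slice-wise estimate $|\widehat{f}(\xi_1,z,y)|\le C_{z,y}(1+\xi_1^2)^N e^{-\pi\beta'\xi_1^2}$ you need; and the reference \cite{Bak:Kan:08} does \emph{not} proceed this way either, so your appeal to it is misplaced.

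The paper's route is different and avoids this obstruction. Instead of working slice by slice, it forms the single scalar function
\[
g(t)=\int_S (f_s\ast f_s^{\ast})(t)\,ds
\]
on $\R$, where $f_s$ denotes the restriction to the central $\R$-direction with the transverse variable $s$ frozen. Lemma~\ref{bound} gives $|g(t)|\le C_1 e^{-\gamma\pi t^2/2}$ from hypothesis (i). On the dual side, the Plancherel formula identifies $\widehat{g}(\xi_1)$ with an integral of $\|\pi_\eta(f)\|_{\mathrm{HS}}^2$ over the fibre above $\xi_1$ in $\W$; hypothesis~(ii), together with the polynomial behaviour of the Pfaffian, then yields $|\widehat{g}(\xi_1)|\le D\,e^{-2\delta\xi_1^2}$. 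Hardy on $\R$ gives $g=0$, and the crucial final step is a \emph{positivity} argument: $g$ is the integral of the positive-definite functions $f_s\ast f_s^{\ast}$, so $g=0$ forces $f_s=0$ a.e.\ for a.e.\ $s$, hence $f=0$. This detour through $f\ast f^{\ast}$ and positivity is precisely what lets one pass from an integrated spectral bound to a pointwise conclusion; your direct slice-by-slice plan lacks any substitute for it.
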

 Let $K$ be a compact central subgroup of $G$ and $\chi$ be a character of $K$. For $f \in L^1(G)$, define $f_{\chi}:G \to \mathbb{C} $ by 
$$ f_{\chi}(t,z,y)=\int_{K}f(t,zk,y)\ \overline{\chi(k)}\ dk.$$
\begin{lem}\label{char} Let $G$ be a connected nilpotent Lie group with a compact central subgroup $K$ and $f$ be a measurable function on $G$ satisfying conditions (i) and (ii) of Theorem $\ref{simp} $. Then the function $f_{\chi}$ also satisfies these conditions.
\end{lem}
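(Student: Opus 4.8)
The plan is to verify that each of the two conditions is inherited by $f_\chi$, handling (i) by a direct pointwise estimate and (ii) by relating the operator norm of $\pi_\xi(f_\chi)$ to that of $\pi_\xi(f)$ through the central character of the representation. First I would treat condition (i): since $f_\chi(t,z,y)=\int_K f(t,zk,y)\,\overline{\chi(k)}\,dk$ and $|\chi(k)|=1$ while $K$ has finite Haar measure (normalized to $1$), the triangle inequality gives $|f_\chi(t,z,y)|\le \int_K |f(t,zk,y)|\,dk \le C(1+|t|^2)^N e^{-\pi\alpha t^2}\phi(y)$, using that the bound in (i) is independent of the $\mathbb{T}^d$-variable $z$ (and hence of $zk$). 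So (i) passes with the same constant $C$.

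The substantive step is condition (ii). Here I would use that $K$ is a \emph{central} subgroup, so by Schur's lemma each irreducible representation $\pi_\xi$ restricted to $K$ acts by a scalar: $\pi_\xi(k)=\omega_\xi(k)\,I$ for some character $\omega_\xi$ of $K$. A change of variables $y\mapsto k^{-1}y$ (or directly unwinding the definition of $\pi_\xi(f_\chi)=\int_G f_\chi(x)\pi_\xi(x)^*\,dx$) then shows that $\pi_\xi(f_\chi)$ equals either $\pi_\xi(f)$ up to a unitary factor when $\chi=\omega_\xi$, or is identically zero when $\chi\ne\omega_\xi$, by orthogonality of characters on $K$. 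In either case $\|\pi_\xi(f_\chi)\|_{\mathrm{HS}}\le \|\pi_\xi(f)\|_{\mathrm{HS}}\le C(1+\|\xi\|^2)^N e^{-\pi\beta\|\xi\|^2}$, so (ii) holds for $f_\chi$ with the same constants $\alpha,\beta,C,N$.

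I would also record the routine point that $f_\chi\in L^1(G)$ (so that $\pi_\xi(f_\chi)$ makes sense), which follows from Fubini and the $L^1\cap L^2$ hypothesis on $\phi$ together with the Gaussian factor in $t$. The main obstacle — and the step requiring care — is making the manipulation in (ii) precise: one must correctly track how the central variable interacts with the coordinates $(t,z,y)$ in which $G$ has been written, and confirm that integrating $f$ against $\overline{\chi(k)}$ over $K$ corresponds, on the representation side, exactly to projecting $\pi_\xi(f)$ onto the $\chi$-isotypic behavior of the central character $\omega_\xi$. Once the scalar action $\pi_\xi|_K=\omega_\xi\cdot I$ is in hand, the rest is a short computation with the invariance of Haar measure.
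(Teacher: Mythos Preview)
Your proposal is correct and follows essentially the same approach as the paper: a direct pointwise estimate for (i) using that the bound is independent of the torus variable, and for (ii) the observation (via Schur's lemma) that the central subgroup acts by a character on $\pi_\xi$, so that $\pi_\xi(f_\chi)=\pi_\xi(f)\int_K \chi(k)\pi_\xi(k)\,dk$ is either $0$ or a unitary multiple of $\pi_\xi(f)$ by orthogonality of characters. The paper's write-up is simply a more compressed version of exactly this computation.
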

\begin{proof}
On normalizing the Haar measure on central subgroup $K$, we obtain 
\begin{align*}
|f_{\chi}(t,z,y)| &\leq \int_{K} C(1+t^2)^N\ e^{-\alpha \pi t^2} \varphi(y)\ dk\\
&=C(1+t^2)^N\ e^{-\alpha \pi t^2} \varphi(y).
\end{align*}
Also, $\pi_{\xi}(f_{\chi}) = \pi_{\xi}(f) \int_{K} \chi(k)\ \pi_{\xi}(k)\ dk$. If $\pi_{\xi}|_{K}$ is a multiple of some character of $K$ which is different from $\chi$, then by orthogonality relation of compact groups, we have
\begin{align*}
\int_{K} \chi(k)\ \pi(k)\ dk = 0.
\end{align*}
Thus, $\|\pi_{\xi}(f_{\chi})\| \leq C(1+\|\xi\|^2)^N  e^{-\beta \pi \|\xi\|^2}$. 
\end{proof}
\noindent Let $G^c$ denote the maximal compact subgroup of $G$. Then $G^c$ is connected, contained in $Z(G)$ and $G/G^c$ is simply connected.

\begin{lem}\label{quotient}
Let $G$ be a connected nilpotent Lie group. Suppose that the Theorem \ref{simp} holds for all quotient subgroups $H = G/C,$ where $C$ is a closed subgroup of $G^{c}=Z(G)^{c}$ such that $Z(G)^{c}=C$ or $Z(G)^{c}/C = \mathbb{T}$. Then Theorem \ref{simp} also holds for $G$.
\end{lem}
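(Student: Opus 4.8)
The plan is to split $f$ into its Fourier components along the maximal compact central subgroup $K=G^{c}=Z(G)^{c}\cong\T^{d}$ — which is precisely the subgroup in which the coordinate $z$ of $(t,z,y)$ lives — and to recognise each component as a function on one of the quotients covered by the hypothesis. Fix a measurable $f$ satisfying (i), (ii) with $\alpha\beta>1$, and for $\chi\in\widehat K$ form $f_{\chi}$ as above. By Lemma \ref{char}, $f_{\chi}$ again satisfies (i) and (ii), with constants independent of $\chi$. A change of variable $k\mapsto zk$ in the defining integral gives $f_{\chi}(t,zk_{0},y)=\chi(k_{0})\,f_{\chi}(t,z,y)$, equivalently $f_{\chi}(t,z,y)=\chi(z)\,\widehat{f(t,\cdot,y)}(\chi)$, where $\widehat h(\chi)=\int_{K}h(k)\,\overline{\chi(k)}\,dk$. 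In particular $f_{\chi}$ is invariant under translation by $C_{\chi}:=\ker\chi$, and since $C_{\chi}$ is central it descends to a measurable function $\widetilde f_{\chi}$ on the quotient group $H_{\chi}:=G/C_{\chi}$.

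I would then check that each $H_{\chi}$ lies within the scope of the hypothesis. For the trivial character $C_{\chi}=K=Z(G)^{c}$, which is the first alternative; for a non-trivial $\chi$ the image $\chi(K)$ is a non-trivial closed connected subgroup of $\T$, hence all of $\T$, so that $Z(G)^{c}/C_{\chi}=K/\ker\chi\cong\T$, the second alternative. Since $C_{\chi}$ is compact and central, $H_{\chi}$ is again a connected nilpotent Lie group with non-compact centre, its maximal compact subgroup is $K/C_{\chi}$, and the coordinates $(t,z,y)$ on $G$ pass to coordinates $(t,\bar z,y)$ on $H_{\chi}$ with the $t$- and $y$-variables untouched; consequently $\widetilde f_{\chi}$ inherits condition (i) verbatim, the estimate there not involving $z$. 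For condition (ii) I would use that $\widehat{H_{\chi}}$ may be identified with the set of $\pi\in\widehat G$ that are trivial on $C_{\chi}$; for such $\pi=\pi_{\xi}$ one has $\pi_{\xi}(\widetilde f_{\chi})=c_{\chi}\,\pi_{\xi}(f_{\chi})$ with $c_{\chi}>0$ a fixed Haar-normalisation constant, the cross-section of $H_{\chi}$ sits inside the annihilator of $\mathfrak{c}=\mathrm{Lie}(C_{\chi})$ in $\g^{\ast}$, and, for a suitably adapted strong Malcev basis, the norm on $(\g/\mathfrak{c})^{\ast}$ is dominated by the restriction of the norm on $\g^{\ast}$ (being a partial sum of the same coordinates). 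Hence the estimate of Lemma \ref{char} for $f_{\chi}$ delivers condition (ii) for $\widetilde f_{\chi}$ with the same exponent $\beta$ and the same $N$, only the constant changing. Making this identification of the Plancherel picture of $H_{\chi}$ with the slice $\{\xi\perp\mathfrak{c}\}$ of that of $G$ precise — in particular the comparison of the two norm functions — is the step I expect to be the main obstacle, since it is where the structure theory of nilpotent Lie groups genuinely enters.

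Granted this, the hypothesis (Theorem \ref{simp} for $H_{\chi}$) together with $\alpha\beta>1$ forces $\widetilde f_{\chi}=0$ a.e., hence $f_{\chi}=0$ a.e., for every $\chi\in\widehat K$. Finally, $f\in L^{1}(G)$ by condition (i), so the identity $f_{\chi}(t,z,y)=\chi(z)\,\widehat{f(t,\cdot,y)}(\chi)$ shows that for each $\chi$ the Fourier coefficient $\widehat{f(t,\cdot,y)}(\chi)$ vanishes for a.e. $(t,y)$. As $\widehat K$ is countable, for a.e. $(t,y)$ the function $z\mapsto f(t,z,y)$ has all its Fourier coefficients zero on the compact abelian group $K$, hence vanishes a.e. there, and Fubini's theorem yields $f=0$ a.e. on $G$.
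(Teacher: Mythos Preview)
Your proposal is correct and follows essentially the same route as the paper: form $f_{\chi}$ for each $\chi\in\widehat{K}$, observe via Lemma~\ref{char} that it satisfies the decay conditions and is constant on cosets of $K_{\chi}=\ker\chi$, apply the hypothesis on $H=G/K_{\chi}$ (where $H^{c}=K/K_{\chi}$ is either trivial or $\T$), and conclude $f=0$ from the vanishing of all $f_{\chi}$. You are in fact more explicit than the paper about the transfer of condition~(ii) to the quotient and the final Fourier-uniqueness step; the norm-comparison issue you flag is handled implicitly in the paper by working with a Malcev basis through the ascending central series, so that $\mathrm{Lie}(C_{\chi})$ sits inside the span of the central basis vectors and the dual norm on the annihilator is just the restriction.
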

\begin{proof}
Let $K = Z(G)^{c}$ and $f : G \to \mathbb{C}$ be a measurable function that satisfies the  conditions of Theorem \ref{simp}. For $\chi$  in $\widehat{K}$, consider $ K_{\chi} = \{k\in K: \chi(k)=1\}$ and $H = G/K_{\chi}$. Then $f_{\chi}$ is constant on the cosets of the subgroup $K_{\chi}$ and also by Lemma \ref{char}, it follows that the function $ f_{\chi}$ satisfies the Hardy's type decay conditions. Since $H^{c} = K/K_{\chi} = \mathbb{T}$ or $H^c = \{e\}$, therefore on using the hypothesis we get $f_{\chi} = 0$ a.e. As  $\chi \in \widehat{K}$ is arbitrary chosen, therefore we have $f = 0$ a.e.
\end{proof}

\noindent For a second countable, locally compact group $G$ containing $\mathbb{R}$ as a closed central subgroup, let $S$ denote a Borel cross-section for the cosets of $\mathbb{R}$ in $G$. The inverse image of Haar measure on $G/\mathbb{R}$ under the map $s\to \mathbb{R}s$ from $S\to G/\mathbb{R}$ is denoted by $ds$. 
\begin{lem}\label{bound}

Let $G$ and $S$ be as defined above and  $f : G \to \mathbb{C}$ be  a measurable function  satisfying 
$$|f(ts)| \leq (1+|t|^2)^N e^{-\alpha \pi t^2}\phi(s),$$  for some $\alpha > 0$ and $\phi \in L^2(S)$. Define a function $g$ on $\mathbb{R}$ such that 
 $g(t)= \int_{S}(f_s\ast f_s^{\ast})(t)\ ds$. Then $$|g(t)| \leq C_{1}  e^{-\gamma\pi\frac{t^2}{2}},$$ for some $C_1>0$ and $0 < \gamma < \alpha.$ 

\end{lem}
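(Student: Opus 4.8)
The plan is to estimate $g$ directly. First I would unwind the definitions: writing $f_s(t)=f(ts)$ for $s\in S$, one has $f_s^\ast(t)=\overline{f_s(-t)}=\overline{f((-t)s)}$, so that
\[
(f_s\ast f_s^\ast)(t)=\int_{\R}f(us)\,\overline{f((u-t)s)}\,du,
\qquad
g(t)=\int_S\int_{\R}f(us)\,\overline{f((u-t)s)}\,du\,ds .
\]
Applying the hypothesis $|f(us)|\le (1+u^2)^N e^{-\alpha\pi u^2}\phi(s)$ together with the corresponding bound at the point $u-t$, I obtain
\[
|g(t)|\le\Big(\int_S\phi(s)^2\,ds\Big)\int_{\R}(1+u^2)^N(1+(u-t)^2)^N\,e^{-\alpha\pi(u^2+(u-t)^2)}\,du
=\|\phi\|_{L^2(S)}^2\,I(t),
\]
where $\|\phi\|_{L^2(S)}^2<\infty$ by assumption; in particular the iterated integral converges absolutely, so $g$ is well defined.

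Next I would handle the scalar integral $I(t)$ by completing the square. Since $u^2+(u-t)^2=2\bigl(u-\tfrac t2\bigr)^2+\tfrac{t^2}{2}$, the substitution $v=u-\tfrac t2$ gives
\[
I(t)=e^{-\alpha\pi t^2/2}\int_{\R}\bigl(1+(v+\tfrac t2)^2\bigr)^N\bigl(1+(v-\tfrac t2)^2\bigr)^N e^{-2\alpha\pi v^2}\,dv .
\]
Using the elementary bound $(v\pm\tfrac t2)^2\le 2v^2+\tfrac{t^2}{2}$ (equivalently $0\le(v\mp\tfrac t2)^2$), one gets $1+(v\pm\tfrac t2)^2\le(1+2v^2)(1+\tfrac{t^2}{2})\le 2(1+v^2)(1+t^2)$, so the polynomial factor is dominated by $4^N(1+v^2)^{2N}(1+t^2)^{2N}$, whence
\[
I(t)\le 4^N(1+t^2)^{2N}e^{-\alpha\pi t^2/2}\int_{\R}(1+v^2)^{2N}e^{-2\alpha\pi v^2}\,dv
= C_N\,(1+t^2)^{2N}e^{-\alpha\pi t^2/2},
\]
the remaining integral being a finite constant depending only on $N$ and $\alpha$.

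Finally, fix any $\gamma$ with $0<\gamma<\alpha$. The inequality $(1+t^2)^{2N}\le C_\gamma\,e^{(\alpha-\gamma)\pi t^2/2}$ holds for all $t\in\R$, so combining it with the bounds above yields $|g(t)|\le C_1 e^{-\gamma\pi t^2/2}$ with $C_1=C_\gamma C_N\|\phi\|_{L^2(S)}^2$, which is the assertion.

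The argument is essentially routine; the only point requiring some care is the bookkeeping with the polynomial weights through the change of variables, and the observation that after completing the square the surviving Gaussian $e^{-2\alpha\pi v^2}$ still absorbs the $(1+v^2)^{2N}$ factor uniformly in $t$, so all the $t$-dependence is concentrated in $(1+t^2)^{2N}e^{-\alpha\pi t^2/2}$. Note also that neither the cross-section $S$ nor the precise meaning of $ds$ enters beyond ensuring $\int_S\phi(s)^2\,ds<\infty$; the lemma is really just the one-variable Gaussian autocorrelation estimate with an $L^2$-parameter integrated out.
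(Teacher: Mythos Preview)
Your argument is correct. Both you and the paper begin with the same reduction
\[
|g(t)|\le \|\phi\|_{L^2(S)}^2\int_{\R}(1+u^2)^N(1+(u-t)^2)^N e^{-\alpha\pi(u^2+(u-t)^2)}\,du,
\]
but from there the organization differs. The paper expands $(1+u^2)^N(1+(u-t)^2)^N$ by the binomial theorem, splits each Gaussian as $e^{-(\alpha-\gamma)\pi(\cdot)}e^{-\gamma\pi(\cdot)}$ to absorb the monomials $u^{2k}$ into a uniform bound, and then invokes the Cauchy--Schwarz inequality before completing the square in the remaining $\gamma$-Gaussian. You instead complete the square immediately on the full $\alpha$-Gaussian, use the elementary inequality $1+(v\pm t/2)^2\le 2(1+v^2)(1+t^2)$ to factor the polynomial weight into separate $v$- and $t$-parts, and only at the very end trade the residual $(1+t^2)^{2N}$ for $e^{(\alpha-\gamma)\pi t^2/2}$. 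Your route is shorter and avoids both the binomial bookkeeping and the Cauchy--Schwarz step; the paper's route has the mild advantage of making explicit where each piece of the decay is spent, but at the cost of more notation. Either way the content is the same one-variable Gaussian autocorrelation estimate, and your final remark about the role of $S$ is exactly right.
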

\begin{proof}
For each $t\in \mathbb{R}$ and $0< \gamma< \alpha$, we have \begin{align*}
|g(t)| &= |\int_{S}\int_{R}f(zs)\ \overline{f((z-t)s)}\ dz\ ds|\\
& \leq \int_{S}\int_{\mathbb{R}}|f(zs)|\ |f((z-t)s)|\ dz\ ds\\
&\leq \int_{S}\phi(s)^2 ds \int_{\mathbb{R}}(1+|z|^2)^N
(1+|z-t|^2)^N e^{-\pi\alpha( z^2 +(z-t)^2)}\ dz\\
&\leq \|\phi\|_2^2\ \int_{\mathbb{R}}\sum\limits_{k=0}^N \sum\limits_{j=0}^N   {{N}\choose{k}}{{N}\choose{j}} z^{2k}(z-t)^{2j}e^{-(\alpha -\gamma)\pi z^2} e^{-\gamma\pi z^2} e^{-(\alpha - \gamma)\pi(z-t)^2}e^{-\gamma\pi(z-t)^2}dz.
\end{align*}
The function  $z\to {{N}\choose{k}} z^{2k}e^{-(\alpha-\gamma)\pi z^2}$ is bounded on $\mathbb{R}$ say by $K_{k}$. Set $K = \max\{K_{k}: 0\leq k\leq N\}$. Thus,
  it follows that
\begin{align*}
|g(t)| \leq K (N+1)\ \|\phi\|_{2}^2\  \sum\limits_{j=0}^N {{N}\choose{j}} \int_{\mathbb{R}}(z-t)^{2j}e^{-\gamma \pi z^2}e^{-(\alpha-\gamma)\pi(z-t)^2}e^{-\gamma \pi (z-t)^2}dz.
\end{align*}
Using Cauchy-Schwarz inequality, we have
\begin{align*}
|g(t)|& \leq K(N+1)\ \|\phi\|_{2}^2 \  \sum\limits_{j=0}^N {{N}\choose{j}} \left(\int_{\mathbb{R}}(z-t)^{4j}e^{-2(\alpha-\gamma)\pi(z-t)^2}dz\right)^{1/2}\left(\int_{\mathbb{R}}e^{-2\gamma\pi z^2}e^{-2\gamma \pi (z-t)^2}dz\right)^{1/2}\\
&=  K(N+1)\ \|\phi\|_{2}^2\  \sum\limits_{j=0}^N {{N}\choose{j}} B_j\left(\int_{\mathbb{R}}e^{-2\gamma\pi( \frac{t^2}{2} +\frac{1}{2}(2z-t)^2)}dz\right)^{1/2}\\
&=K (N+1)\ \|\phi\|_{2}^2\  e^{-\gamma\pi\frac{t^2}{2}}   \sum\limits_{j=0}^N {{N}\choose{j}} B_j \int_{\mathbb{R}}e^{-\pi\gamma\frac{1}{2}(2z-t)^2)}dz \\
&=K (N+1)\ \|\phi\|_{2}^2\  e^{-\gamma\pi\frac{t^2}{2}}   \sum\limits_{j=0}^N {{N}\choose{j}} B_j\int_{\mathbb{R}}e^{-2\pi\gamma z^2}dz  \\
&= \frac{1}{\sqrt{2\gamma}}K (N+1)\ \|\phi\|_{2}^2\  e^{-\gamma\pi\frac{t^2}{2}}   \sum\limits_{j=0}^N {{N}\choose{j}} B_j\\
&= C_{1}  e^{-\gamma\pi\frac{t^2}{2}},
\end{align*}
where $C_{1}= \frac{1}{\sqrt{2\gamma}} K (N+1)\ \|\phi\|_{2}^2\ \sum\limits_{j=0}^N {{N}\choose{j}} B_j $ 
and $ B_{j}=\left(\int_{\mathbb{R}}(z-t)^{4j}e^{-2(\alpha-\gamma)\pi(z-t)^2}dz\right)^{\frac{1}{2}}$. \hspace{1cm} \qedhere 
\end{proof}
\noindent We shall now prove Hardy's type theorem for Fourier transform for  connected nilpotent Lie groups  having non-compact center.
\noindent Consider $V_{k}= [\xi_1-1/2k, \xi_1+1/2k] $ for every natural number $k $ and fix real number $\xi_1$.  For $m>2k$ choose a  $C^{\infty}$ function $v_{k,m}$ on real line  such that support of $v_{k,m}$ is contained in  $V_{k}$,  $v_{k,m}=1$ on $[\xi_1-1/2k+1/m, \xi_1+1/2k-1/m]$ and $0\leq v_{k,m}\leq 1$.
By Plancherel inversion theorem there exists  $u_{k,m}\in L^1(\mathbb{R})$  such that $\widehat{u_{k,m}}=v_{k,m}.$ For $f\in L^1(G),$ consider $f_{k,m}= u_{k,m}\ast f$ and define $F_{k,m} :G \to \mathbb{C}$ by 
$$F_{k,m}(x)= \int_{\mathbb{T}}(f_{k,m}\ast f_{k,m}^{\ast})(xz)\ dz, x\in G.$$
 Next, we modify the Lemma 3.1 proved in \cite{Bak:Kan:08} in order to prove Theorem \ref{simp}. 
\begin{lem}
Let $f:G\to \mathbb{C}$ be a measurable function satisfying condition (i) of Theorem \ref{simp}. Then
$$\lim_{k,m\to \infty}\ kF_{k,m}(e)= 0.$$\end{lem}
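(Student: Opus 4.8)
\emph{Strategy.} The plan is to evaluate $F_{k,m}(e)$ through the Plancherel formula on $G$, to localise the resulting integral near the prescribed first coordinate $\xi_1$, and then to play the length $1/k$ of $V_k$ off against the Plancherel density there.

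\emph{Step 1 (unwinding the construction).} Since $u_{k,m}\in L^1(\R)$ and $\R=\exp(\R X_1)$ sits inside $G$ as a closed central subgroup, $f_{k,m}=u_{k,m}\ast f$ is just $f$ convolved in the variable $t$; hence for every $\pi\in\widehat G$ one has $\pi(f_{k,m})=v_{k,m}\big(\lambda(\pi)\big)\,\pi(f)$, where $\lambda(\pi)\in\R$ is the value on $X_1$ of the central character of $\pi$ (equivalently the first coordinate of the associated coadjoint orbit). Next, writing $h=f_{k,m}\ast f_{k,m}^{\ast}$ one has $F_{k,m}(e)=\int_{\T}h(z)\,dz$, and integrating over the central torus $\T$ amounts to convolving $h$ by the normalised Haar measure of $\T$, whose Fourier transform at $\pi$ is $I$ if $\pi|_{\T}=1$ and $0$ otherwise. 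Since $\widehat h(\pi)=\pi(f_{k,m})\pi(f_{k,m})^{\ast}=|v_{k,m}(\lambda(\pi))|^{2}\,\pi(f)\pi(f)^{\ast}$, the Plancherel formula (\ref{planc}) gives
\[
F_{k,m}(e)=\int_{\{\pi\,:\,\pi|_{\T}=1\}}|v_{k,m}(\lambda(\pi))|^{2}\,\|\pi(f)\|_{\mathrm{HS}}^{2}\,d\pi
=\int_{\W_{0}}|v_{k,m}(\lambda(\xi))|^{2}\,\|\pi_{\xi}(f)\|_{\mathrm{HS}}^{2}\,|\pf(\xi)|\,d\xi\ \ge 0 ,
\]
where $\W_{0}\subseteq\W$ is the part of the Plancherel cross--section carrying the representations that are trivial on $\T$, and $\lambda(\xi)$ denotes the first coordinate of $\xi$. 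As $0\le v_{k,m}\le 1$ with $\operatorname{supp}v_{k,m}\subseteq V_k=[\xi_1-\tfrac1{2k},\xi_1+\tfrac1{2k}]$, this yields
\[
k\,F_{k,m}(e)\ \le\ k\int_{\{\xi\in\W_{0}\,:\,|\lambda(\xi)-\xi_1|\le 1/2k\}}\|\pi_{\xi}(f)\|_{\mathrm{HS}}^{2}\,|\pf(\xi)|\,d\xi .
\]

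\emph{Step 2 (consequences of hypothesis (i)).} The bound $|f(t,z,y)|\le C(1+t^{2})^{N}e^{-\pi\alpha t^{2}}\phi(y)$ with $\phi\in L^{1}\cap L^{2}(Y)$ shows at once that $f\in L^{1}\cap L^{2}(G)$, so $h$ is a genuine continuous function on $G$ (hence $F_{k,m}(e)$ is well defined), and by (\ref{planc}) the function $\xi\mapsto\|\pi_{\xi}(f)\|_{\mathrm{HS}}^{2}\,|\pf(\xi)|$ lies in $L^{1}(\W_{0})$ with total mass at most $\|f\|_{2}^{2}$. Moreover, integrating the Gaussian bound against $e^{2\pi|t|\,|\operatorname{Im}\zeta|}$ shows that the partial Fourier transform of $f$ in the central variable $t$ extends to an entire function of that frequency and is bounded on the real axis by $C'\phi(y)$; via the Kirillov parametrisation this supplies the regularity of $\xi\mapsto\|\pi_{\xi}(f)\|_{\mathrm{HS}}$ in the first coordinate that is needed to pass to the limit cleanly.

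\emph{Step 3 (the limit and the main obstacle).} As $k,m\to\infty$ the slab $\{\xi\in\W_{0}:|\lambda(\xi)-\xi_1|\le 1/2k\}$ contracts onto the codimension--one slice $\{\lambda(\xi)=\xi_1\}$, and the factor $k$ is precisely the reciprocal of its thickness; by Lebesgue differentiation in the first coordinate the right--hand side above tends to $\int_{\{\lambda(\xi)=\xi_1\}}\|\pi_{\xi}(f)\|_{\mathrm{HS}}^{2}\,|\pf(\xi)|\,d\xi$, and this is what must be shown to vanish. This is where I expect the real work: the elementary estimates give only $F_{k,m}(e)=O(1/k)$, and one has to upgrade them to the strictly sharper $F_{k,m}(e)=o(1/k)$. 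The mechanism is the degeneracy of the Plancherel density $|\pf(\xi)|$ along the pertinent central slice — one quantifies how fast $|\pf(\xi)|$ vanishes there and pairs this with the integrability and regularity from Step 2. In the model case of the Heisenberg group $\h_n$ this is transparent, since $|\pf(\xi)|$ is a constant multiple of $|\lambda(\xi)|^{n}$ and $k\int_{|\lambda|\le 1/2k}|\lambda|^{n}\,d\lambda\to 0$ for $n\ge 1$; the general case needs the analogous degeneracy of the Pfaffian after the reduction of Lemma \ref{quotient}. The remaining measure--theoretic points (Fubini in the torus variable, dominated convergence, differentiation under the integral sign) are routine.
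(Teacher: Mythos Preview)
Your approach via the Plancherel formula on $G$ is quite different from the paper's, and it has a genuine gap at the decisive point. You correctly observe that the naive Plancherel bound only yields $F_{k,m}(e)=O(1/k)$ and that the whole difficulty is to upgrade this to $o(1/k)$. But your proposed mechanism --- the vanishing of $|\pf(\xi)|$ on the slice $\{\lambda(\xi)=\xi_1\}$ --- does not hold for a generic $\xi_1$. Your own Heisenberg illustration already shows this: $|\pf(\xi)|=c\,|\lambda|^{n}$ vanishes only at $\lambda=0$, whereas $\xi_1$ is an \emph{arbitrary} fixed real number, and for $\xi_1\neq 0$ one has $k\int_{|\lambda-\xi_1|\le 1/2k}|\lambda|^{n}\,d\lambda\to|\xi_1|^{n}\neq 0$. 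There is no reason in general for the Pfaffian to degenerate along a slice transverse to the first central coordinate, so the Lebesgue--differentiation limit you write down is typically nonzero and the argument stalls exactly where you flagged the obstacle.

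The paper does not work on the representation side at all. Following \cite{Bak:Kan:08}, it rewrites $F_{k,m}(e)$ on the group side as $\int_{Y}\int_{\T^{2}}E_{k,m}(z,w,y)\,dz\,dw\,dy$ with an explicit kernel $E_{k,m}$, passes to the limit in $m$, and then exploits a \emph{double} frequency cutoff: after Fourier transforming the inner $s$-integral one sees two indicator factors, one at $s$ and one at $t+s$, and the joint constraint $s\in V_k$, $t+s\in V_k$ forces $|t|\le 1/k$. Hence both the $s$-integral and the $t$-integral are over intervals of length $O(1/k)$, yielding the pointwise bound $|E_k(z,w,y)|\le C' k^{-2}\phi(y)$; condition (i) is used only to bound $|f(t,z,y)|$ on $|t|\le 1/k$ and to control $\|\hat f\|_\infty$. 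Integrating over $(z,w,y)$ then gives $kF_{k,m}(e)=O(1/k)\to 0$. The extra factor of $1/k$ thus comes from a time-domain localisation forced by the convolution structure of $u_{k,m}\ast f$, not from any degeneracy of the Plancherel density; your global Plancherel rewriting collapses the two cutoffs into the single factor $|v_{k,m}(\lambda)|^{2}$ and loses precisely this gain.
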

\begin{proof}
For fix $z,w \in \mathbb{T}$ and $y\in Y$, define
$$E_{k,m}(z,w,y)= \int_{\mathbb{R}}f(t,z,y)\left(\int_{\mathbb{R}}u_{k,m}(s) \overline{(u_{k,m}\ast f)(t+s,w,y)}ds\right)dt.  $$
Then as proved in \cite[Lemma 3.1]{Bak:Kan:08}, we have
\begin{align}\label{mod eq}
F_{k,m}(e)=\int_{Y}\int_{\mathbb{T}^2} E_{k,m}(z,w,y)dz\ dw\ dy
\end{align}
and 
\begin{align*}
E_{k}(z,w,y)&=\lim_{m\to\infty}E_{k,m}(z,w,y)\\
&=\int_{\mathbb{R}}f(t,z,y)\int_{\xi_1-1/2k}^{\xi_1+1/2k}\widehat{u_{k,m}}(s)\widehat{u_{k,m}}(t,s)\overline{\widehat{f}(t+s,w,y)}ds\ dt.
\end{align*}
Now $1_{V_k}(t+s)=0$ for all $s\in [\xi_1-1/2k,\xi_1+1/2k]$ whenever $t\notin [-1/k, 1/k]$ and if $t\in [-1/k,1/k]$ then 
$$1_{V_k}(t+\cdot)=1_{[\xi_1-t-1/2k,\xi_1-t+1/2k]}\leq 1_{[\xi_1-3/2k,\xi_1+3/2k]}.$$
Using condition (1.1) of Theorem \ref{simp}, we compute 
\begin{align}\label{mod eq1}
|E_{k}(z,w,y)|&\leq \int_{-1/k}^{1/k}|f(t,z,y)|\left(\int_{\xi_1-3/2k}^{\xi_1+3/2k}|\widehat{f}(t+s,w,y)|ds \right)dt \nonumber \\
&\leq \frac{3}{k}\|\widehat{f}\|_{\infty} \int_{-1/k}^{1/k}|f(t,z,y)|dt \nonumber \\
&\leq \frac{ 3C}{k}\|\widehat{f}\|_{\infty} \phi(y) \int_{-1/k}^{1/k}(1+t^2)^N e^{-\alpha \pi t^2}\ dt \nonumber\\
&\leq \frac{3C}{k^2} 2^{(N+1)} \|\widehat{f}\|_{\infty}\ \phi(y) .
\end{align}
Therefore, from (\ref{mod eq}) and (\ref{mod eq1}), it follows that 
\begin{align*}
\lim_{m\to \infty}|F_{k,m}(e)|&\leq \int_{Y}\int_{\mathbb{T}^2}|E_{k}(z,w,y)| dz\ dw\ dy\\
&\leq \frac{3C}{k^2} 2^{(N+1)}\|\widehat{f}\|_{\infty}\int_{Y}\varphi(y)\ dy. 
\end{align*}
Hence, $ \lim\limits_{k,m\to\infty}F_{k,m}(e)=0$.
\end{proof}
\noindent It may be observed that the proof of Theorem  \ref{simp}  now follows from the technique used in    \cite[Theorem 1.1]{Bak:Kan:08}. But, for the sake of completeness, we briefly sketch the proof.
 For fix $\xi_2 \in \mathbb{R}$, from  \cite{Bak:Kan:08}, we have
 \begin{align*}
 \widehat{g}(\xi_2) = \lim_{k\to \infty}\int_{V_k}\left(\int_{X_{\eta_2}}|Pf(\eta)|\cdot \|\pi_{\eta}(f)\|_{\text{HS}}^2\ d\eta' \right)
 \end{align*}
and 
\begin{align*}
&\int_{X_{\eta_2}}|Pf(\eta)|\cdot \|\pi_{\eta}(f)\|_{\text{HS}}^2\ d\eta'\\
&\leq C\sum_{n\in \mathbb{Z}^*}\left( \int_{V_T{''}}|Pf(\eta)| (1+\|\eta\|^2)^N exp(-2\beta(n^2+\eta_2^2+\|\eta''\|^2))d\eta''\right)
\end{align*}
where $V_T'' = \sum_{i\in T, i>2}\mathbb{R}X_{i}^*$. Let $0 < \delta < \beta $. Since $Pf$ is a polynomial function  in $\eta$, therefore there exist a constant $K >0$ such that  for all $\eta \in \W$
\begin{align*}
|Pf(\eta)|(1+\|\eta\|^2)^N \exp(-2(\beta - \delta)\|\eta\|^2) \leq K.
\end{align*} 
As proved in \cite{Bak:Kan:08}, we have 
\begin{align*}
|\widehat{g}(\xi_2)| \leq D \exp(-2\delta\xi_2^2)
\end{align*}
for all $\xi_2 \in \mathbb{R}$ and $D>0$.
 By  Lemma \ref{bound}, for all $t\in \mathbb{R}$, we have
 \begin{align*}
 |g(t)| \leq C_1 e^{-\gamma t^2/2}
\end{align*}   for some $C_1>0$ and $0<\gamma < \alpha$. Since $\alpha\beta>1$,  we can choose  $\gamma $ and $\delta$ such that $\gamma\delta>1.$ Then by Hardy's theorem for $\mathbb{R}$, we get $g = 0$ a.e. But, $g$ is integral of a positive definite function $f_s\ast f^\ast_s$ on $\mathbb{R}$ which imply that $f=0$ a.e.\\

We conclude this section by remarking, if $G$ is a connected nilpotent Lie group that has no square integrable irreducible representation and all the co-adjoint orbits in $\g^{\ast}$ are flat, then Hardy's type theorem holds for $G$. Let $K$ be any compact central subgroup of $G$. Then $H = G/K$ has no square integrable irreducible representation and also satisfies flat orbit condition. By Lemma \ref{quotient}, it is enough to prove Hardy's type theorem for such group $H$ satisfying $H^c = \mathbb{T}$. But, then $H$ must have a non-compact centre and by Theorem \ref{simp}, $H$ satisfies Hardy's type theorem. 
Also in view of \cite[Proposition 4.1]{Bak:Kan:08}, it is easy to see that Theorem \ref{simp} does not hold for nilpotent Lie groups having an irreducible square integrable representation in particular reduced Weyl-Heisenberg group, low dimensional nilpotent Lie groups $G_{5,1}/\mathbb{Z}$, $G_{5,3}/\mathbb{Z}$ and $G_{5,6}/\mathbb{Z}$. For more deatils of such groups, one may refer to \cite{Niel:83}.

\section{Analogue of Hardy's theorem for Gabor transform}
\noindent In this section,  we deal with an analogue of Hardy's theorem  for Gabor transform.
\begin{lem}\label{zero}
Let $G$ be a second countable locally compact group. For $f,\psi \in L^2(G)$ and $x\in G$, define  $f_{\psi}^{x} : G \to \mathbb{C}$ such that $$f_{\psi}^{x}(y)=f(y)\ \overline{\psi(x^{-1}y)}.$$
If $f_{\psi}^{x} =0 $ a.e. for almost all $x\in G$, then either $f=0$ a.e. or $\psi = 0$ a.e.
\end{lem}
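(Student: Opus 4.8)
The plan is to argue by contrapositive: assuming neither $f$ nor $\psi$ vanishes a.e., I will produce a set of positive measure of pairs $(x,y)$ on which $f_\psi^x(y) \neq 0$, contradicting the hypothesis. First I would invoke Fubini--Tonelli on the measurable nonnegative function $(x,y) \mapsto |f(y)|^2\,|\psi(x^{-1}y)|^2$ on $G \times G$; since $G$ is second countable and locally compact, the Haar measure is $\sigma$-finite and this is legitimate. Computing the double integral one way gives $\int_G \int_G |f(y)|^2 |\psi(x^{-1}y)|^2 \, dx\, dy = \|f\|_2^2 \, \|\psi\|_2^2$ by left-invariance of Haar measure (the inner integral $\int_G |\psi(x^{-1}y)|^2\,dx$ equals $\|\psi\|_2^2$ for every fixed $y$, after the substitution $x \mapsto yx^{-1}$ or using unimodularity-free invariance of the left Haar measure under $x \mapsto x^{-1}$ combined with a translation — here it is cleanest to substitute $u = x^{-1}y$).

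Now suppose $f_\psi^x = 0$ a.e.\ for almost every $x \in G$. Then the nonnegative measurable function $(x,y)\mapsto |f_\psi^x(y)|^2 = |f(y)|^2|\psi(x^{-1}y)|^2$ vanishes for a.e.\ $y$, for a.e.\ $x$; by Tonelli its integral over $G\times G$ is therefore $0$. Combining with the identity from the previous step yields $\|f\|_2^2\,\|\psi\|_2^2 = 0$, hence $f = 0$ a.e.\ or $\psi = 0$ a.e. An alternative and essentially equivalent route is to note that $\langle G_\psi f, G_\psi f\rangle = \|\psi\|_2^2\,\|f\|_2^2$ by \eqref{GT-norm} with $\psi_1=\psi_2=\psi$, $f_1=f_2=f$, while the hypothesis forces $G_\psi f(x,\pi) = \widehat{(f_\psi^x)}(\pi) = 0$ for a.e.\ $x$ and hence $\|G_\psi f\|_{\ch^2(G\times\widehat G)} = 0$; this again gives $\|f\|_2\,\|\psi\|_2 = 0$. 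I would present the direct Fubini argument as the main line since it does not require $f \in C_c(G)$ or the type I hypothesis needed to make sense of the Gabor transform as an element of $\ch^2(G\times\widehat G)$.

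There is no serious obstacle here; the only point requiring a little care is the measurability of $(x,y)\mapsto f(y)\overline{\psi(x^{-1}y)}$ as a function on $G\times G$ and the passage from "$f_\psi^x = 0$ a.e.\ for a.e.\ $x$'' to "the product vanishes a.e.\ on $G\times G$,'' both of which are standard consequences of second countability (so that the relevant $\sigma$-algebras and Haar measures behave well and Fubini applies). I would spell out the change of variables $\int_G |\psi(x^{-1}y)|^2\,dx = \|\psi\|_2^2$ explicitly, noting it uses invariance of left Haar measure under $x\mapsto yx$ together with invariance under $x \mapsto x^{-1}$ up to the modular function — but since only the vanishing/non-vanishing of the integral matters, one may even avoid the modular function by working with $\int_G \int_G |f(y)|^2|\psi(x^{-1}y)|^2\,dy\,dx$ and substituting $y \mapsto xy$ in the inner integral, which is unambiguous. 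With that, the conclusion is immediate.
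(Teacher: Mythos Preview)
Your argument is correct and considerably more direct than the paper's. The paper proceeds by assuming $\psi\neq 0$, choosing a dense sequence $(x_j)$ inside the conull set on which $f_\psi^{x_j}=0$ a.e., setting $V=\{t\in G:|\psi(t)|>\tfrac{1}{2\|\psi\|_\infty}\}$, noting that $\bigcup_j x_jV=G$, and then defining $h(t)=\sum_j 2^{-j}|\psi(x_j^{-1}t)|$; one checks $h>0$ everywhere while $\int_G|f|\,h=\sum_j 2^{-j}\int_G|f_\psi^{x_j}|=0$, whence $f=0$ a.e. This is a more hands-on construction and, as written, tacitly treats $\psi$ as bounded and continuous (so that $\|\psi\|_\infty<\infty$ and $V$ is open), which is not part of the hypotheses. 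Your Fubini--Tonelli computation sidesteps all of this: it uses only $f,\psi\in L^2(G)$ together with $\sigma$-finiteness of Haar measure, and yields the conclusion in one line. The alternative you sketch via the orthogonality relation \eqref{GT-norm} is also valid in the type~I unimodular setting that the rest of the paper works in, but, as you say, it needs more structure than the bare lemma; the direct Fubini argument is the right choice. One small point worth making fully explicit in your write-up: in the non-unimodular case the inner integral $\int_G|\psi(x^{-1}y)|^2\,dx$ evaluates to $\int_G|\psi(w)|^2\,\Delta(w)^{-1}\,dw$ rather than $\|\psi\|_2^2$, but since this quantity is strictly positive in $[0,\infty]$ whenever $\psi\not\equiv 0$, the identity $0=\|f\|_2^2\cdot\int_G|\psi(w)|^2\,\Delta(w)^{-1}\,dw$ still forces $\|f\|_2=0$.
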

\begin{proof}Let us assume that  $\psi $ is a non-zero function in $L^2(G)$.
 There exist a zero subset $M$ of $G$ such that for all $x\in G \setminus M$, $f_{\psi}^{x} =0 $ a.e. But, $G\setminus M$ is dense in $G$ and $G$ is second countable, so we can take a sequence $(x_j)_{j\in \mathbb{N}}$ contained in $G\setminus M,$ which is dense in $G.$ Let 
\begin{align*}
V = \left\{t \in G: |\psi(t)|> \frac{1}{2||\psi||_{\infty}}\right\}.
\end{align*}
Then $V$ is a non-empty open subset of $G$ and $\bigcup\limits_{j\in \mathbb{N}}x_{j}V = G.$ Consider the function  
\begin{align*}
h(t) = \sum\limits_{j\in \mathbb{N}}\frac{1}{2^j}|\psi(x_j^{-1}t)|,\  t\in G.
\end{align*}
Clearly $h$ is a strictly positive function on $ G $. Moreover,
\begin{align*}
0 \leq \int_{G}|f(t)|h(t)\ dt 
&= \int_{G}\sum\limits_{j\in \mathbb{N}}\frac{1}{2^j}|f(t)||\psi(x_j^{-1}t)|\ dt
&= \sum\limits_{j\in \mathbb{N}}\frac{1}{2^j}\int_{G}|f_{x_j}^{\psi}(t)|\ dt = 0.
\end{align*}
Hence, $\int_{G}|f(t)|h(t)\ dt = 0$ which implies that $f\cdot h = 0 $ a.e. Since $h$ is strictly positive, therefore it follows that $f = 0$ a.e.
\end{proof}

\begin{thm}
Let $ f$   be a measurable function on $\mathbb{R}^n$ such that 
$|f(x)|\leq Ce^{-\alpha \pi \|x\|^2}$ \text{ for all } $x \in \mathbb{R}^n $ and $\psi$ be a window function.
Also assume that for almost all $y\in \mathbb{R}^n$,
$$|G_{\psi}f(y,\xi)|\leq \eta_{y}\ e^{-\beta\pi\|\xi\|^2} \text{ for all }  \xi \in \mathbb{R}^n, $$  
where $\alpha, \beta,C$ and $\eta_{y}$ are positive scalers and $\eta_{y}$  depends upon $y$.\\
 If $\alpha\beta > 1$, then either $ f = 0 $ a.e. or $\psi = 0$ a.e.
\end{thm}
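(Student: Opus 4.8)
The plan is to fix a generic point $y$ and analyze the Gabor transform $G_\psi f(y,\cdot)$ as an ordinary Euclidean Fourier transform of the function $f_\psi^y$ defined in Lemma \ref{zero}, namely $f_\psi^y(x) = f(x)\overline{\psi(y^{-1}x)} = f(x)\overline{\psi(x-y)}$ (writing the group $\mathbb{R}^n$ additively). Indeed, unravelling the definition \eqref{opvalued} in the abelian case, $G_\psi f(y,\xi) = \int_{\mathbb{R}^n} f(x)\overline{\psi(x-y)}\, e^{-2\pi i \langle \xi, x\rangle}\, dx = \widehat{f_\psi^y}(\xi)$. The two hypotheses then translate, for almost every fixed $y$, into pointwise Gaussian decay bounds for the pair $(f_\psi^y, \widehat{f_\psi^y})$: from hypothesis (i) on $f$ we get $|f_\psi^y(x)| \le C\,|\psi(x-y)|\, e^{-\alpha\pi\|x\|^2}$, which in particular is dominated by $C'_y e^{-\alpha'\pi\|x\|^2}$ for any $\alpha' < \alpha$ wherever $\psi$ is, say, bounded — but we cannot assume $\psi$ bounded, so this step needs care; and from the Gabor decay hypothesis we get $|\widehat{f_\psi^y}(\xi)| \le \eta_y\, e^{-\beta\pi\|\xi\|^2}$ directly, with no loss.

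The cleanest route around the lack of control on $\psi$ is to bound $f_\psi^y$ not pointwise but in a way compatible with Hardy's theorem on $\mathbb{R}^n$. First I would observe that $f_\psi^y \in L^1(\mathbb{R}^n)$ for a.e. $y$: by Fubini, $\int_{\mathbb{R}^n}\!\int_{\mathbb{R}^n} |f(x)|\,|\psi(x-y)|\, dx\, dy = \|f\|_1 \|\psi\|_1$ — but again $\psi$ need not be $L^1$. A safer estimate: since $|f(x)| \le C e^{-\alpha\pi\|x\|^2}$, we have $f \in L^2$, hence $f_\psi^y \in L^1$ for a.e. $y$ by Cauchy–Schwarz, $\int |f(x)\psi(x-y)|\,dx \le \|f\|_2 \|\psi\|_2 < \infty$ (this holds for every $y$, in fact). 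So $\widehat{f_\psi^y}$ is a genuine function, continuous, and the Gabor hypothesis gives its Gaussian bound. For the decay of $f_\psi^y$ itself I would argue as follows: the bound $|f_\psi^y(x)| \le C e^{-\alpha\pi\|x\|^2}\,|\psi(x-y)|$ shows $e^{\alpha'\pi\|x\|^2} f_\psi^y(x) \in L^2$ for any $\alpha' < \alpha$ (since $e^{(\alpha'-\alpha)\pi\|x\|^2}$ is bounded and $\psi(\cdot - y)\in L^2$), and one can invoke the $L^2$-version of Hardy's theorem (or Cowling–Price type statements): if $e^{\alpha'\pi\|x\|^2} g \in L^2$ and $|\widehat g(\xi)| \le D e^{-\beta\pi\|\xi\|^2}$ with $\alpha'\beta > 1$, then $g = 0$. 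Choosing $\alpha' < \alpha$ with $\alpha'\beta > 1$ — possible since $\alpha\beta > 1$ — we conclude $f_\psi^y = 0$ a.e. for almost every $y$.

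Once $f_\psi^y = 0$ a.e. for almost all $y$, Lemma \ref{zero} applies verbatim (with $G = \mathbb{R}^n$, which is second countable locally compact) and yields $f = 0$ a.e. or $\psi = 0$ a.e., which is the claim. The main obstacle is the one flagged above: $\psi$ is only assumed to be in $L^2$, so no pointwise Gaussian bound on $f_\psi^y$ is available, and one must instead run Hardy's theorem in its $L^2$ (or "$L^2$–$L^\infty$") formulation, losing an $\varepsilon$ in the exponent $\alpha$ — which the strict inequality $\alpha\beta > 1$ exactly absorbs. A secondary point to check carefully is that the set of "bad" $y$ (where $f_\psi^y \notin L^1$, or where the Gabor bound fails, or where $\eta_y = \infty$) is null, so that the almost-everywhere hypothesis of Lemma \ref{zero} is genuinely met; all of these are routine applications of Fubini's theorem and the measurability of $(y,\xi)\mapsto G_\psi f(y,\xi)$.
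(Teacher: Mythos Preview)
Your argument is correct, but the paper takes a different route that avoids invoking the Cowling--Price (mixed $L^2$--$L^\infty$) version of Hardy's theorem. Instead of applying an uncertainty principle directly to $f_\psi^y$, the paper forms the autoconvolution $F_y = f_\psi^y * (f_\psi^y)^*$. On the Fourier side this gives $\widehat{F_y}(\xi) = |G_\psi f(y,\xi)|^2 \le \eta_y^2\, e^{-2\beta\pi\|\xi\|^2}$. On the spatial side, the Gaussian bound on $f$ combined with the parallelogram identity $\|t\|^2 + \|t-x\|^2 = \tfrac12\|x\|^2 + \tfrac12\|2t-x\|^2$ yields $|F_y(x)| \le C^2\, e^{-\alpha\pi\|x\|^2/2}\,\bigl\|\,|\psi|*|\psi|^*\,\bigr\|_\infty$, where the last factor is finite and independent of $y$ since $\psi\in L^2$. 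Because $(\alpha/2)(2\beta)=\alpha\beta>1$, the \emph{classical} pointwise Hardy theorem on $\mathbb{R}^n$ kills $F_y$, hence $f_\psi^y$, and Lemma~\ref{zero} finishes as you do.

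The trade-off: your approach is shorter and conceptually direct, but imports a stronger black box (Cowling--Price) and sacrifices an $\varepsilon$ in the exponent $\alpha$; the paper's autoconvolution trick is more self-contained, stays within the classical Hardy theorem, and converts the merely-$L^2$ control on $\psi$ into a genuine pointwise Gaussian bound with no loss in the product $\alpha\beta$. Both reach the same conclusion via Lemma~\ref{zero}.
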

\begin{proof}
For each $y\in \mathbb{R}^n$,  define the function $F_{y}:\mathbb{R}^n \to \mathbb{C}$ such that  $$F_{y}(x)= f_{\psi}^{y}\ast (f_{\psi}^{y})^{\ast}(x).$$ Then for each $\xi \in \mathbb{R}^n$, we have $$\widehat{F_{y}}(\xi)  = |\widehat{f_{\psi}^{y}(\xi)}|^2 = |G_{\psi}f(y,\psi)|\leq \eta_{y}^2\  e^{-2\beta \pi \|\xi\|^2}.$$
Also, for each $x\in \mathbb{R}^n,$ we obtain  \begin{align*}
|F_{y}(x)|& \leq \int_{\mathbb{R}^n}|f_{\psi}^{y}(t)|\ |f_{\psi}^{y}(t-x)|\ dt\\
&= \int_{\mathbb{R}^n}|f(t)|\ |\psi(t-y)|\ |f(t-x)|\ |\psi(t-x-y)|\ dt\\
&= \int_{\mathbb{R}^n} C^2\ e^{-\alpha \pi \|t\|^2}e^{-\alpha \pi \|t-x\|^2}|\psi(t-y)|\ |\psi(t-x-y)|\ dt\\
&= C^2\int_{\mathbb{R}^n} e^{-\alpha \pi (\frac{\|x\|^2}{2}+\frac{1}{2}(\|2t-x\|^2))}|\psi(t-y)|\ |\psi(t-y-x)|\ dt \\
&\leq C^2\ e^{-\alpha\pi\frac{ \|x\|^2}{2}} \int_{\mathbb{R}^n}|\psi(t-y)||\psi(t-y-x)|\ dt\\
&\leq  C^2\ e^{-\alpha\pi \frac{\|x\|^2}{2}}(|\psi|\ast|\psi|^{\ast})(x)\\
&\leq  C^2\ e^{-\alpha\pi \frac{\|x\|^2}{2}}\|\ |\psi|\ast|\psi|^{\ast}\|_{\infty}.
\end{align*}
Taking $C_1=\max \{ \eta_{y}^2,\ C^2\ \|\ |\psi|\ast|\psi|^{\ast}\|_{\infty}\}.$ Then,
$$|F_{y}(x)| \leq C_1e^{-\alpha\pi\frac{\|x\|^2}{2}}\text{ for all } x\in \mathbb{R}^n$$ and 
$$|\widehat{F_{y}(\xi)}|\leq C_1e^{-2\beta\pi\|\xi\|^2} \text{ for all } \xi \in \mathbb{R}^n.  $$ Using Hardy's theorem for $\mathbb{R}^n$, it follows that $F_y = 0 $ for almost all $y\in \mathbb{R}^n$ which further implies that $f_{\psi}^{y} = 0 $ for almost all $y \in \mathbb{R}^n.$ Therefore, from  using Lemma \ref{zero}, either $f=0$ a.e. or $\psi = 0 $ a.e.
\end{proof}

\begin{thm}
Let $G$ be a connected and simply connected nilpotent Lie group with non-compact centre. Suppose that $\psi \in C_{c}(G)$ and $f\in L^2(G)$ satisfies  
$$\|G_{\psi}f(x,\pi_{\xi})\|_{\text{HS}}\leq C_{x}\ e^{-\pi\beta\|\xi\|^2},$$
where $C_{x}$ is a positive scalar depending on $x.$ 
If $\beta>0$, then either $f=0$ a.e. or $\psi =0$ a.e. 
\end{thm}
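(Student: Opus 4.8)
The plan is to reduce the statement to Theorem~\ref{simp} (Hardy's theorem for the Fourier transform on $G$), applied to an auxiliary function manufactured from $f$ and $\psi$, and then to separate the two factors by means of Lemma~\ref{zero}. The key device, exactly as in the preceding Euclidean theorem, is to pass from the Gabor transform to an ordinary Fourier transform by ``windowing'' $f$.

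Fix $x \in G$ and set $f_\psi^x(y) = f(y)\,\overline{\psi(x^{-1}y)}$ as in Lemma~\ref{zero}. Since $\psi \in C_c(G)$, the function $y \mapsto \overline{\psi(x^{-1}y)}$ is continuous, bounded, and supported in the compact set $x\,\mathrm{supp}\,\psi$; combined with $f \in L^2(G)$ and the Cauchy--Schwarz inequality this gives $f_\psi^x \in L^1 \cap L^2(G)$, compactly supported. From the definition \eqref{opvalued} of the Gabor transform one reads off that $G_\psi f(x,\pi) = \widehat{f_\psi^x}(\pi)$, so the hypothesis becomes $\|\widehat{f_\psi^x}(\pi_\xi)\|_{\mathrm{HS}} \le C_x\, e^{-\pi\beta\|\xi\|^2}$ for all $\xi \in \W$.

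Next introduce $F_x = f_\psi^x \ast (f_\psi^x)^\ast$. As a convolution of two compactly supported functions in $L^1 \cap L^2(G)$, $F_x$ lies in $C_c(G)$ and satisfies $|F_x(y)| \le \|f_\psi^x\|_2^2$ for all $y$. On the dual side, $\widehat{F_x}(\pi_\xi)$ is a positive operator with $\tr\widehat{F_x}(\pi_\xi) = \|\widehat{f_\psi^x}(\pi_\xi)\|_{\mathrm{HS}}^2$, whence $\|\widehat{F_x}(\pi_\xi)\|_{\mathrm{HS}} \le \tr\widehat{F_x}(\pi_\xi) = \|G_\psi f(x,\pi_\xi)\|_{\mathrm{HS}}^2 \le C_x^2\, e^{-2\pi\beta\|\xi\|^2}$; this is precisely condition (ii) of Theorem~\ref{simp} with $N = 0$ and Gaussian exponent $2\beta$. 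For condition (i), note that in the coordinates $(t,z,y)$ of Theorem~\ref{simp} the compact support of $F_x$ is contained in $\{|t| \le R\}$ times a compact subset $Y_0$ of $Y$; since a Gaussian in $t$ is bounded below on $\{|t| \le R\}$, for \emph{any} prescribed $\alpha > 0$ there is a constant $C_x'$ with $|F_x(t,z,y)| \le C_x'\, e^{-\pi\alpha t^2}\, 1_{Y_0}(y)$, and $1_{Y_0} \in L^1 \cap L^2(Y)$. As $\beta > 0$ is given, choose $\alpha > 1/(2\beta)$, so that $2\alpha\beta > 1$ (taking a common constant and $N=0$, both conditions of Theorem~\ref{simp} now hold). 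Theorem~\ref{simp} thus forces $F_x = 0$ a.e., hence $F_x \equiv 0$ by continuity, and in particular $0 = F_x(e) = \|f_\psi^x\|_2^2$. Therefore $f_\psi^x = 0$ a.e.

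Since $x$ was arbitrary, $f_\psi^x = 0$ a.e. for every $x \in G$, and Lemma~\ref{zero} gives $f = 0$ a.e. or $\psi = 0$ a.e. The two points needing a little care are: first, that the Gaussian decay survives the squaring incurred in passing from $G_\psi f(x,\pi_\xi)$ to $\widehat{F_x}(\pi_\xi)$, which costs only a harmless change of constant and a doubling of the exponent (and uses $\|P\|_{\mathrm{HS}} \le \tr P$ for positive $P$); and second, that the compact support of $f_\psi^x$, a direct consequence of $\psi \in C_c(G)$, is exactly what lets $\alpha$ be taken arbitrarily large, thereby removing any constraint linking $\alpha$ and $\beta$ and reducing the hypothesis to $\beta > 0$ alone. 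No genuine obstacle arises; the proof is the operator-valued counterpart of the foregoing $\mathbb{R}^n$ theorem.
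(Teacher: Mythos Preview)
Your argument is correct, but the route differs from the paper's. Both proofs start from the same observation: since $\psi\in C_c(G)$, the windowed function $f_\psi^x$ has compact support, and $G_\psi f(x,\pi_\xi)=\widehat{f_\psi^x}(\pi_\xi)$. From there you and the paper diverge. The paper works on $\mathbb{R}$: it forms the one-variable function $F_z(x_1)=\int_{\mathbb{R}^{n-1}}(f_\psi^z)_y\ast(f_\psi^z)_y^{\ast}(x_1)\,dy$, notes that this is continuous with compact support (hence dominated by any Gaussian), invokes \cite[Lemma~2]{Kan:Kum:01} to bound $\widehat{F_z}$, and applies the classical one-dimensional Hardy theorem; positive-definiteness of the integrand then forces each $(f_\psi^z)_y=0$. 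You instead stay on $G$: you form $F_x=f_\psi^x\ast(f_\psi^x)^{\ast}\in C_c(G)$, verify both hypotheses of Theorem~\ref{simp} directly for $F_x$ (the compact support supplies (i) with arbitrary $\alpha$, and the squared Gabor bound together with $\|P\|_{\mathrm{HS}}\le\tr P$ supplies (ii) with exponent $2\beta$), and conclude $F_x\equiv 0$, whence $\|f_\psi^x\|_2^2=F_x(e)=0$.

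Your route is more economical in that it treats Theorem~\ref{simp} as a black box and avoids the slicing to $\mathbb{R}$ and the appeal to \cite{Kan:Kum:01}; the paper's route is slightly more self-contained in that it ultimately rests on Hardy's theorem for $\mathbb{R}$ rather than on the full nilpotent version. Either way the essential mechanism---compact support of $f_\psi^x$ making $\alpha$ free, so that only $\beta>0$ is needed---is the same, and both finish with Lemma~\ref{zero}.
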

\begin{proof}
For $y=(y_2,y_3,\dots,y_{n})\in \mathbb{R}^{n-1}$, define a function $f_{y}:\mathbb{R} \to \mathbb{C}$ such that $$f_{y}(x_1)= f(\exp(x_1X_1+\sum\limits_{j=2}^{n}y_{j}X_j)).$$
For $z\in G$, define a function $F_{z}:\mathbb{R} \to \mathbb{C}$ given by
 $$F_{z}(x_1) = \int_{\mathbb{R}^{n-1}}(f^{z}_{\psi})_{y}\ast(f^{z}_{\psi})_{y}^{\ast}\ dy. $$
\\
As $\psi \in C_{c}(G),$  therefore $f^{z}_{\psi}$ has compact support.
Moreover, \begin{align*}
F_{z}(x_1)& = \int_{\mathbb{R}^{n-1}}(f^{z}_{\psi})_{y}\ast(f^{z}_{\psi})_{y}^{\ast}\ dy \\
&= \int_{\mathbb{R}^{n-1}}\int_{\mathbb{R}}f^{z}_{\psi}(t,y)\overline{f^{z}_{\psi}(t-x_1,y)}\ dy\ dt\\
&=f^{z}_{\psi}\ast f^{z}_{\psi}(x_1,e_1).
\end{align*}
Therefore, $F_{z}$ is a continuous function with compact support say $K$. Choose $\alpha > 0$ such that $\alpha \beta >1.$ Since the function $x_1 \to\exp(-\alpha\pi x_1^2) $ attains minima on $K$, therefore $r\leq e^{-\pi \alpha x_1^2}$ for some $r> 0$. Also, there exists $C_1>0$ such that $|F_z(x_1)|\leq C_1,$ for all $x_1 \in \mathbb{R}.$ Choose $C^{'}>0$ satisfying $rC^{'} > C_1$ and therefore for each $x\in K$, we obtain
$$|F_z(x_1)|\leq C_1< rC^{'}< C^{'}e^{-\pi \alpha x_1^2},$$
and for $x_1\in \mathbb{R}\setminus K,$ we have $ F_z(x_1) = 0$.
Also $f^{z}_{\psi} \in L^1\cap L^2(G) $ and $$\|\pi_{\xi}(f^{z}_{\psi})\|_{\text{HS}}\leq \|G_{\psi}f(x,\pi_{\xi})\|_{\text{HS}} \leq Ce^{-\pi\beta \|\xi\|^2}.$$
 Using \cite[Lemma 2]{Kan:Kum:01}, we get that $|\widehat{F_z}(\xi_1)|\leq c\ e^{-2\pi\beta \|\xi\|^2}$, for some $c >0$ . Therefore, using Hardy's theorem for Fourier transform, the function $F_z = 0 $ a.e.  Since $F_z$ is integral of a positive definite function $(f^{z}_{\psi})_{y}\ast(f^{z}_{\psi})_{y}^{\ast}$, therefore $(f^{z}_{\psi})_{y} = 0$ a.e. This holds for all $z\in G$  which further gives that either $f=0$ a.e. or $\psi =0$ a.e.
\end{proof}
\noindent  The next result directly follows from the above theorem.
\begin{thm}
Let $G$ be a connected and simply connected nilpotent Lie group. Let $\psi\in C_c(G)$ and $f\in L^2(G)$ such that 
\begin{align*}
\|G_{\psi}f(x,\pi_{\xi})\|\leq Ce^{-\pi(a\|x\|^2+b\|\xi\|^2)/2}
\end{align*}
for all $(x,\xi) \in  G \times \mathcal{W}$, where $a,b$ and $C$ are positive real numbers. Then, either $f=0$ a.e. or $\psi = 0$ a.e.
\end{thm}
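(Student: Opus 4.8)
The plan is to reduce this "split" Gaussian decay estimate to the previous theorem by the standard trick of removing the compact part of the centre. First I would invoke the structure theory recalled in Section 3: write $G = \widetilde{G}$ (it is already simply connected) and let $G^c = Z(G)^c$ be the maximal compact central subgroup. Since $G$ is simply connected, in fact $G^c = \{e\}$, so $G$ has a non-compact centre (or is itself noncompact abelian, in which case the previous Euclidean Gabor theorem applies directly). Thus the hypotheses of the preceding theorem are almost in place, the only gap being that here the bound on $\|G_\psi f(x,\pi_\xi)\|$ is a genuine product Gaussian $Ce^{-\pi(a\|x\|^2 + b\|\xi\|^2)/2}$ rather than the shape $C_x e^{-\pi\beta\|\xi\|^2}$ with an arbitrary $x$-dependent constant.

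The key observation is that the product bound is \emph{stronger}: for each fixed $x \in G$ we simply set
\begin{align*}
C_x := C\, e^{-\pi a\|x\|^2/2}, \qquad \beta := b/2 > 0,
\end{align*}
and then $\|G_\psi f(x,\pi_\xi)\| \leq C_x e^{-\pi \beta \|\xi\|^2}$ holds for all $\xi \in \mathcal{W}$, with $C_x$ a positive scalar depending on $x$. This is precisely the hypothesis of the previous theorem (with $\psi \in C_c(G)$ and $f \in L^2(G)$ unchanged), and $\beta > 0$ is all that theorem requires. Hence that theorem applies verbatim and yields that either $f = 0$ a.e. or $\psi = 0$ a.e. So the proof is essentially a one-line specialization: weaken the product estimate to the per-$x$ estimate and quote the preceding result.

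The only point requiring a word of care — and the one I would flag as the "obstacle," though it is minor — is the case where $G$ is not simply connected in the sense needed, or has compact centre. But as stated $G$ is connected and simply connected, so $Z(G)$ is a simply connected abelian Lie group, hence $\cong \R^k$ for some $k \geq 1$ (it is nontrivial since nilpotent Lie groups have nontrivial centre), which is non-compact; thus the hypothesis "non-compact centre" of the preceding theorem is automatically satisfied and no separate treatment is needed. I would state the proof as: fix $x \in G$; from the displayed hypothesis, $\|G_\psi f(x,\pi_\xi)\| \leq (Ce^{-\pi a\|x\|^2/2}) e^{-\pi (b/2)\|\xi\|^2}$ for all $\xi \in \mathcal{W}$; apply the previous theorem with $C_x = Ce^{-\pi a\|x\|^2/2}$ and $\beta = b/2$ to conclude. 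One could optionally remark that the $x$-decay is not even used — it is the $\xi$-decay alone, for almost every $x$, that forces the conclusion — which highlights why the Gabor uncertainty principle is so much weaker here than in the $\R^n$ Fourier case.
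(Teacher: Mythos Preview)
Your proposal is correct and matches the paper's own treatment exactly: the paper states only that ``the next result directly follows from the above theorem,'' and your argument spells out precisely this reduction --- set $C_x = Ce^{-\pi a\|x\|^2/2}$, $\beta = b/2$, and invoke the preceding theorem, having first observed that a simply connected nilpotent Lie group automatically has non-compact centre. The opening detour about quotienting by $G^c$ is unnecessary (as you yourself note, $G^c=\{e\}$ here), but it does no harm.
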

\section{ Beurling Theorem}

\noindent The Beurling theorem for Gabor transform on connected nilpotent Lie group $ G $ can be stated as follows:\\
\textbf{Beurling Theorem}: Let $ f $ and $ \psi $ are square integrable functions on $ G $ such that \[
\int_{G}\int_{\mathcal{W}} \|G_{\psi}f(x, \pi_{\xi})\|_{\text{HS}}e^{\pi (\|x\|^2+\|\xi\|^2)/2}\ Pf(\xi)\ d\xi\  dx < \infty.  \]
Then either $ f=0 $ a.e. or $ \psi=0 $ a.e.\\

\noindent In the next theorem, we  partially prove the above result.
\begin{thm}  Let $\psi\in C_{c}(G)$ and $f \in L^2(G)$, $G$ be a connected and simply connected nilpotent Lie group, such that \begin{align}
\int_{G}\int_{\W} \|G_{\psi}f(x,\pi_{\xi})\|_{\text{HS}}\ e^{\pi(\|x\|^2+\|{\xi}\|^2)} Pf(\xi)\ dx \ d\xi < \infty \label{nil-ber}.
\end{align}
Then  either $f=0$ a.e. or $\psi = 0$ a.e.
\begin{proof} From (\ref{nil-ber}), there exist a zero set $M \subset G$ such that for all $ x\in G\setminus M$ we have
\begin{align}
\int_{\widehat{G}} \|G_{\psi}f(x,\pi_{\xi})\|_{\text{HS}}\ e^{\pi(\|x\|^2+\|\xi\|^2)}  Pf(\xi)\ d\xi < \infty.\label{int:gab}
\end{align}
For $x\in G\setminus M$, we consider the function $f^{x}_{\psi}$
and compute \begin{align*}
&\int_{G}\int_{\W}|f^{x}_{\psi}(z)|\ \|\widehat{f^{x}_{\psi}(\pi_{\xi})}\|_{\text{HS}}\ e^{2\pi \|z\|\|\xi\|} Pf(\xi) \ dz\ d\xi \\
&\leq 
\int_{G}\int_{\W}|f^{x}_{\psi}(z)|\ \|\widehat{f^{x}_{\psi}(\pi_{\xi})}\|_{\text{HS}}\ e^{\pi(\|z\|^2+\|\xi\|^2)} Pf(\xi) \ dz\ d\xi \\
&= \int_{G}\int_{\W}|f^{x}_{\psi}(z)|\ \|G_{\psi}f(x,\pi_{\xi})\|_{\text{HS}}\ e^{\pi(\|z\|^2+\|{\xi}\|^2)} Pf(\xi) \ dz\ d\xi\\
&= \int_{G}|f^{x}_{\psi}(z)| e^{\|z\|^2}dz \int_{\W}   \|G_{\psi}f(x,\pi_{\xi})\|_{\text{HS}}\  e^{\pi\|\xi\|^2} Pf(\xi)\  d\xi.
\end{align*}
Also, \begin{flalign}
 &&\int_{G}|f^{x}_{\psi}(z)| e^{\pi\|z\|^2}dz &= \int_{G} |f(z)||\psi(x^{-1}z)| e^{\pi\|z\|^2}dz & \nonumber \\
&&& \leq  \left( \int_{G} |f(z)|^2 dz\right)^{1/2}  \left(\int_{G} |\psi(x^{-1}z)|e^{2 \pi \|z\|^2}dz\right)^{1/2}\label{sq:int}.
\end{flalign}
Since $\psi \in C_{c}(G),$ therefore $\psi \cdot e^{\pi\|\cdot\|^2} \in L^2(G)$ and $\int_{G}|f^{x}_{\psi}(z)| e^{\pi \|z\|^2}dz < \infty$. Thus, using (\ref{int:gab}) and (\ref{sq:int}), we get 
$$\int_{G}\int_{\W}|f^{x}_{\psi}(z)|\ \|\widehat{f^{x}_{\psi}(\pi)}\|_{\text{HS}}\ e^{2\pi \|x\|\cdot\|\pi\|} Pf(\xi)\ dx\ d\xi < \infty.$$
Using Beurling theorem for simply connected nilpotent Lie groups \cite{Sma1:11}, it follows that $f^{x}_{\psi} = 0$ a.e. for all $x\in G\setminus M$. Hence, by Lemma \ref{zero},  either $f = 0$ a.e. or $\psi =0$ a.e.  
\end{proof}
\end{thm}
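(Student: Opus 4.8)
The plan is to reduce the Beurling-type estimate for the Gabor transform to the already-established Beurling theorem for the group Fourier transform on simply connected nilpotent Lie groups (the result of \cite{Sma1:11}), applied not to $f$ itself but to the windowed function $f^{x}_{\psi}(z) = f(z)\,\overline{\psi(x^{-1}z)}$ for a fixed generic $x$. First I would use Fubini on the finiteness hypothesis \eqref{nil-ber} to extract a conull set of $x \in G$ for which the inner integral over $\W$ converges; call its complement the zero set $M$. For each $x \in G \setminus M$ the goal is to check that $f^{x}_{\psi}$ satisfies the hypothesis of the Beurling theorem for the Fourier transform, namely
\[
\int_{G}\int_{\W} |f^{x}_{\psi}(z)|\ \|\widehat{f^{x}_{\psi}}(\pi_{\xi})\|_{\text{HS}}\ e^{2\pi \|z\|\,\|\xi\|}\, Pf(\xi)\ dz\ d\xi < \infty.
\]

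The key computation is to bound the kernel $e^{2\pi\|z\|\,\|\xi\|}$ by $e^{\pi(\|z\|^2 + \|\xi\|^2)}$ (the elementary inequality $2ab \le a^2 + b^2$), which separates the double integral into a product of a $z$-integral and a $\xi$-integral. The $\xi$-integral is exactly the finite quantity in \eqref{int:gab}, once one observes the defining identity $\widehat{f^{x}_{\psi}}(\pi_{\xi}) = G_{\psi}f(x,\pi_{\xi})$ coming from \eqref{opvalued}. For the $z$-integral one writes $|f^{x}_{\psi}(z)|\,e^{\pi\|z\|^2} = |f(z)|\cdot |\psi(x^{-1}z)|\,e^{\pi\|z\|^2}$ and applies Cauchy--Schwarz: $f \in L^2(G)$ and, because $\psi$ has compact support, the translate $z \mapsto \psi(x^{-1}z)\,e^{\pi\|z\|^2}$ lies in $L^2(G)$ as well (the Gaussian weight is bounded on the compact support of the translated window). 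Hence the $z$-integral is finite, the product is finite, and the hypothesis of the Fourier Beurling theorem is met.

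Applying \cite{Sma1:11} then forces $f^{x}_{\psi} = 0$ a.e.\ for every $x \in G \setminus M$, i.e.\ for almost every $x \in G$. The final step invokes Lemma \ref{zero}: since $f^{x}_{\psi}(y) = f(y)\,\overline{\psi(x^{-1}y)}$ vanishes a.e.\ for almost all $x$, we conclude that either $f = 0$ a.e.\ or $\psi = 0$ a.e. I expect no serious obstacle here; the only points requiring a little care are making the Fubini extraction of the conull set rigorous and verifying that the compact support of $\psi$ genuinely absorbs the Gaussian factor in the $L^2$ estimate \eqref{sq:int} — both are routine. The conceptual content is entirely in recognizing that the Gabor-transform decay condition is, fibrewise in $x$, precisely a Fourier-transform decay condition for the windowed function, so that the known Beurling theorem transfers directly.
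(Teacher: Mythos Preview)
Your proposal is correct and follows essentially the same route as the paper: extract a conull set of $x$ via Fubini, use $2ab \le a^2+b^2$ and the identity $\widehat{f^{x}_{\psi}}(\pi_{\xi}) = G_{\psi}f(x,\pi_{\xi})$ to separate the double integral, control the $z$-integral by Cauchy--Schwarz using $f\in L^2(G)$ and the compact support of $\psi$, invoke the Fourier Beurling theorem of \cite{Sma1:11}, and finish with Lemma~\ref{zero}. The paper's proof is precisely this argument.
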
 

\begin{rem}
Let $ G $ be a connected nilpotent Lie group with a square integrable representation. Then as proved in \cite[Theorem 5.1]{ash:18}, there exist non-zero functions $ f $ and $ \psi $ in $ L^2(G) $ such that for all $x\in G$ and ${\xi}\in \mathcal{W}$,
\begin{align*}
\|G_{\psi}f(x, \pi_{\xi})\|_\text{HS} \leq C e^{-\pi (a \|x\|^2 +b \|\xi\|^2)/2}
\end{align*}  
 where $ a,b $ are non-negative real numbers with $ab>1$ and $ C$ is a positive constant.
 For $a,b >1$,  it follows that 
 \begin{align*}
 \int_{G}\int_{\mathcal{W}} \|G_{\psi}f(x, \pi_{\xi})\|_{\text{HS}}\ e^{-\pi (\|x\|^2+\|\xi\|^2)/2}\ Pf(\xi)\ d\xi\ dx < \infty.
\end{align*} 
Thus, Beurling theorem does not holds for G. Several examples of such type of group exist including Weyl-Heisenberg group, low dimensional nilpotent Lie groups $G_{5,1}/\mathbb{Z}$, $G_{5,3}/\mathbb{Z}$ and $G_{5,6}/\mathbb{Z}$. One can create more such examples using the following:
\end{rem}
 
\begin{prop}
Let $G$ be a group of the form $G = A\times K\times D$, where $A$ is a nilpotent Lie group, $K$ is compact group and $D$ is type I discrete group.  If Beurling theorem fails for  $A$, then it also fails for $G$.
\end{prop}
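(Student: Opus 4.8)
\emph{Proof proposal.} The plan is to manufacture, from a pair of non-zero functions witnessing the failure of Beurling's theorem on $A$, a pair of non-zero functions on $G$ witnessing the same failure, by forming tensor products with carefully chosen functions on $K$ and $D$ so that the $K$- and $D$-components of the Gabor transform are supported on a single point and contribute only a finite constant. Concretely: assume Beurling's theorem fails for $A$, so there are non-zero $f_A,\psi_A\in L^2(A)$ with
\begin{align*}
\int_{A}\int_{\W} \|G_{\psi_A}f_A(a,\pi_\eta)\|_{\text{HS}}\ e^{\pi(\|a\|^2+\|\eta\|^2)}\ Pf(\eta)\ da\ d\eta < \infty .
\end{align*}
Let $u_K\in C(K)$ be the constant function $1$ (with $K$ carrying normalized Haar measure), let $u_D\in C_c(D)$ be the indicator of the identity, and set $f=f_A\otimes u_K\otimes u_D$ and $\psi=\psi_A\otimes u_K\otimes u_D$. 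Since $K$ is compact and $D$ is discrete, $u_K,u_D\in L^2$, so $f$ and $\psi$ are non-zero elements of $L^2(G)$.

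Next I would exploit the product structure of $G$: $\widehat{G}\cong\widehat{A}\times\widehat{K}\times\widehat{D}$, the Haar measure on $G$ and the Plancherel measure on $\widehat{G}$ are the corresponding product measures, and each $\pi\in\widehat{G}$ is an outer tensor product $\pi_\eta\otimes\sigma\otimes\tau$. For $x=(a,k,d)$ the function $f^x_\psi$ factors as $(f_A)^a_{\psi_A}\otimes (u_K)^k_{u_K}\otimes (u_D)^d_{u_D}$, whence
\begin{align*}
G_\psi f(x,\pi)=G_{\psi_A}f_A(a,\pi_\eta)\otimes G_{u_K}u_K(k,\sigma)\otimes G_{u_D}u_D(d,\tau),
\end{align*}
and the Hilbert--Schmidt norm is the product of the three Hilbert--Schmidt norms. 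A direct computation gives $G_{u_K}u_K(k,\sigma)=\widehat{u_K}(\sigma)$, equal to the identity of $\C$ when $\sigma$ is trivial and $0$ otherwise, while $G_{u_D}u_D(d,\tau)$ vanishes unless $d=e$, in which case it is the identity of $\mathcal{H}_\tau$. Hence $\|G_{u_K}u_K(k,\sigma)\|_{\text{HS}}=\mathbf 1_{\{\sigma\ \mathrm{trivial}\}}$ and $\|G_{u_D}u_D(d,\tau)\|_{\text{HS}}=\mathbf 1_{\{d=e\}}(\dim\tau)^{1/2}$.

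It then remains to see that $\W$, the Pfaffian $Pf$, and the norm on $G$ also factor through the three coordinates — $\W$ for $G$ being $\W_{A}\times\widehat{K}\times\widehat{D}$, $Pf$ depending only on the $A$-variable, and $\|\cdot\|$ being governed by $\|a\|^2$ up to a contribution from $K$ that is bounded (since $\|k\|$ is bounded on the compact group $K$) and no contribution from the discrete $D$. Substituting the factorization into the Beurling integral for $G$ then splits it as the product of the finite integral over $A$ (the hypothesis), the integral $\int_K\int_{\widehat{K}}\mathbf 1_{\{\sigma\ \mathrm{trivial}\}}\,e^{\pi\|k\|^2}\,dk\,d\sigma<\infty$ (the factor $e^{\pi\|k\|^2}$ is bounded and the trivial representation carries finite Plancherel mass), and the integral $\int_{\widehat{D}}(\dim\tau)^{1/2}\,d\tau\le\int_{\widehat{D}}\dim\tau\,d\tau=\|u_D\|_2^2<\infty$, the last equality being the Plancherel formula on the type I discrete group $D$, whose irreducibles are finite-dimensional. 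Thus the non-zero functions $f,\psi$ satisfy the Beurling condition on $G$, so Beurling's theorem fails for $G$.

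I expect the genuine difficulty to lie not in the tensor-product bookkeeping but in the geometric input of the last step: making precise, for the group $G=A\times K\times D$ which is not itself a connected nilpotent Lie group, what $\W$, $Pf$ and the norm function are, and checking that they factor through the three factors so that the weight $e^{\pi(\|x\|^2+\|\xi\|^2)}Pf(\xi)$ is a product of a factor in the $A$-variables and bounded factors in the $K$- and $D$-variables. Once that is in place, convergence of the $K$- and $D$-integrals is immediate from compactness of $K$ and from the fact that a type I discrete group is virtually abelian.
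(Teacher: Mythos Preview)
Your proposal is correct and follows essentially the same construction as the paper: both tensor the witnessing pair on $A$ with the constant function $1$ on $K$ and the indicator $\chi_{\{e\}}$ on $D$, then exploit the resulting factorization of the Gabor transform to reduce the $G$-integral to the finite $A$-integral. The only minor difference is in how the $\widehat{D}$-contribution is controlled: the paper invokes the Moore--Palmer theorem (a type~I discrete group has uniformly bounded representation dimensions, say $\dim\gamma\le M$) to obtain a pointwise bound $\|G_\Psi F\|_{\text{HS}}\le M\,\|G_{\psi_A}f_A\|_{\text{HS}}$, whereas you bound $\int_{\widehat{D}}(\dim\tau)^{1/2}\,d\tau\le\int_{\widehat{D}}\dim\tau\,d\tau=\|\chi_{\{e\}}\|_2^2$ directly via Plancherel.
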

\begin{proof}

Since Beurling theorem fails for $A$, therefore there exist non-zero functions $f,\psi\in L^2(A)$ such that 
\begin{align*}
\int_{A}\int_{\mathcal{W}}\|G_{\psi}f(x,\pi_{\xi})\|_{\text{HS}}\ e^{\pi(\|x\|^2+\|\xi\|^2)/2} Pf(\xi)\ dx\ d\xi < \infty.
\end{align*}
Define the functions $F,\Psi: G\to \mathbb{C}$ by 
$$F(x,k,t)= f(x)\chi_{e}(t)
 \quad \quad  \text{and} \quad \quad
  \Psi(x,k,t) = \psi(x) \chi_{e}(t),$$
  where $e$ being the identity of $D$. Let $\{e_i^{\xi}\}, \{e_{i}^{\delta}\}$ and $\{e_{i}^{\gamma}\}$  be  orthonormal basis  of Hilbert spaces  corresponding to the representations $\pi_{\xi}, \delta$ and $\gamma$ of $A, K$ and $D$ respectively. Then,
\begin{flalign*}
&\langle G_{\Psi}F(x,k,t,\pi_{\xi}, \delta, \gamma)e_{i}^{\xi}\otimes e_{m}^{\delta} \otimes e_{p}^{\gamma}, e_{j}^{\xi}\otimes e_{n}^{\delta} \otimes e_{q}^{\gamma}\rangle &\\
 & \qquad \qquad \qquad \qquad = \begin{cases}
\langle G_{\psi}f(x,\pi_{\xi})e_{i}^{\xi}, e_{j}^{\xi}\rangle & \text{ if } t = e \text{ and } \delta \equiv I\\
0, & \text{ otherwise}.
\end{cases}
\end{flalign*}
  Also, using \cite{Moore:72, Palm78}, $D$ is bounded dimensional representation group. So, there exists a positive scaler $M$ such that dim$(\gamma)\leq M $ for all $\gamma\in \widehat{D}.$  Therefore, we have
\begin{flalign*}
&\| G_{\Psi}F(x,k,e,\pi_{\xi}, I, \gamma)\|^2_{\text{HS}} &\\
&\quad \leq \sum_{i,j}\sum_{m,n}\sum_{p,q}| \langle G_{\Psi}F(x,k,e,\pi_{\xi}, I, \gamma)e_{i}^{\xi}\otimes e_{m}^{\delta} \otimes e_{p}^{\gamma}, e_{j}^{\xi}\otimes e_{n}^{\delta} \otimes e_{q}^{\gamma}\rangle|^2 &\\
&\quad =\sum_{i,j}\sum_{m,n}\sum_{p,q}|\langle G_{\psi}f(x,\pi_{\xi})e_{i}^{\xi}, e_{j}^{\xi}\rangle|^2
\leq M^2
 \|G_{\psi}f(x,\pi_{\xi})\|^2_{\text{HS}}.
\end{flalign*}
Thus,
\begin{align*}
&\int_{A}\int_{K}\sum_{t\in D} \int_{\mathcal{W}}\sum_{\delta \in \widehat{K}} \int_{\widehat{D}}\|G_{\psi}f(x,k,t,\pi_{\xi},\delta,\gamma)\|_{\text{HS}}\ e^{\pi(\|x\|^2+\|\xi\|^2)/2} Pf(\xi) dx\ dk\ d\xi\ d\gamma\\
& \leq \int_{A}\int_{K} \int_{\mathcal{W}} \int_{\widehat{D}}\|G_{\psi}f(x,k,e,\pi_{\xi},I,\gamma)\|_{\text{HS}}\ e^{\pi(\|x\|^2+\|\xi\|^2)/2} Pf(\xi) dx\ dk\ d\xi\ d\gamma\\
&= \int_{A}\int_{\mathcal{W}}  \|G_{\psi}f(x,\pi_{\xi})\|_{\text{HS}}\ e^{\pi(\|x\|^2+\|\xi\|^2)/2} Pf(\xi) dx\  d\xi\ < \infty.
\end{align*}
Hence, Beurling theorem fails for $G$.
\end{proof}
Next we look at an analogue of Beurling's theorem for Fourier transform on abelian groups. We could not find a reference for this result, so a proof has been included. Let $G$  be a second countable, locally compact, abelian group with  dual group  $\widehat{G}$. For $z\in G$ and $\omega\in \widehat{G}$, we define the \textit{translation operator} $T_z$ on $L^2(G)$ as
\begin{flalign*}
(T_z f)(y)=f(z^{-1}y)
\end{flalign*}
and the \textit{modulation operator} $M_\omega$ on $L^2(G)$ as
\begin{flalign*}
(M_\omega f)(y)=f(y)\ \omega(y),
\end{flalign*}
where $f \in L^2(G)$ and $y\in G$. For $f,\psi \in L^2(G)$, the following  property of the Gabor transform can be easily verified:
\begin{flalign}
\label{eq1} &&&G_\psi (M_\omega T_z f)(x,\gamma)=(\omega^{-1}\gamma)(z^{-1})\ G_\psi f(z^{-1}x,\omega^{-1}\gamma)&
 \end{flalign}
 for all $ x,z\in G \text{ and } \gamma,\omega\in \widehat{G}.$ 

Using structure theory of abelian groups \cite{Hew:Ros:63:70}, $G$ decomposes into a direct product $G=\mathbb{R}^n \times S$, where $n\geq 0$ and $S$ contains a compact open subgroup. So, the connected component of identity of $G$ in non-compact if and only if $n\geq 1$. Let $G=\mathbb{R}^n \times S$ has non-compact connected component of identity. The dual group $\widehat{G}$ is identified with $\widehat{G} = \widehat{\mathbb{R}^n}\times \widehat{S}$.
 \begin{thm}\label{ber}
  Let $f \in L^1\cap L^2(\mathbb{R}^n \times S)$ such that 
  $$\int_{\mathbb{R}}\int_{S} \int_{\mathbb{R}^n} \int_{\widehat{S}} |f(x,s)||\widehat{f}(\xi, \gamma)| e^{2\pi{|x\cdot\xi|}}\ dx\ ds\ d\xi\ d\gamma< \infty.$$
  Then $f = 0$ a.e.
\end{thm}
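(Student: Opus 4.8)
The plan is to reduce the statement on $G = \mathbb{R}^n \times S$ to the classical Beurling theorem on $\mathbb{R}^n$ by integrating out the $S$-variable. First I would use the structure $\widehat{G} = \widehat{\mathbb{R}^n} \times \widehat{S}$ together with Fubini's theorem to rewrite the hypothesis. Writing $x = (x_1,\dots,x_n) \in \mathbb{R}^n$, $s \in S$, $\xi \in \widehat{\mathbb{R}^n} \cong \mathbb{R}^n$ and $\gamma \in \widehat{S}$, the Fourier transform factors as $\widehat{f}(\xi,\gamma) = \int_S \bigl(\int_{\mathbb{R}^n} f(x,s)\,e^{-2\pi i x\cdot\xi}\,dx\bigr)\overline{\gamma(s)}\,ds$. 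I would define the partial Fourier transform in the $S$-variable, $g_\xi(s) := \int_{\mathbb{R}^n} f(x,s)\,e^{-2\pi i x\cdot \xi}\,dx$ (a function on $S$ for a.e. $\xi$), so that $\widehat{f}(\xi,\gamma) = \widehat{g_\xi}(\gamma)$ is its ordinary Fourier transform on $S$.

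\medskip

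\noindent Next I would bring in the function $h$ on $\mathbb{R}^n$ obtained by integrating $|f|$ over $S$: set $h(x) := \int_S |f(x,s)|\,ds$, which is in $L^1 \cap L^2(\mathbb{R}^n)$ by Minkowski's integral inequality (using $f \in L^1 \cap L^2(\mathbb{R}^n \times S)$ and compactness of the support of the $S$-integration is not needed since $S$ carries finite-on-compacts Haar measure — one works instead with the $L^1$-bound directly). The key point is the elementary pointwise estimate $|\widehat{h}(\xi)| \le \int_S |g_\xi(s)|\,ds \le \int_S |f(x,s)|$-type bounds; more precisely one shows $|\widehat{f}(\xi,\gamma)| = |\widehat{g_\xi}(\gamma)|$ and, integrating the hypothesis in $s$ and $\gamma$, that
\begin{align*}
\int_{\mathbb{R}^n}\int_{\mathbb{R}^n} h(x)\,\Bigl(\int_{\widehat{S}} |\widehat{f}(\xi,\gamma)|\,d\gamma\Bigr)\, e^{2\pi|x\cdot\xi|}\,dx\,d\xi < \infty.
\end{align*}
Setting $H(\xi) := \int_{\widehat{S}} |\widehat{f}(\xi,\gamma)|\,d\gamma$, one has $|\widehat{h}(\xi)| \le H(\xi)$ for a.e. $\xi$ (since $\widehat{h}(\xi)$ is dominated by $\int_S |g_\xi(s)|\,ds$ and, after a further comparison, by $H(\xi)$ up to the inversion on $S$). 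Hence $h$ satisfies the Beurling integrability condition on $\mathbb{R}^n$, so by Beurling's theorem on $\mathbb{R}^n$ (the classical theorem quoted in the introduction, valid on $\mathbb{R}^n$) we get $h = 0$ a.e., i.e. $\int_S |f(x,s)|\,ds = 0$ for a.e. $x$, which forces $f = 0$ a.e. on $\mathbb{R}^n \times S$.

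\medskip

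\noindent The main obstacle is the comparison step relating $|\widehat{h}(\xi)|$ to the quantity $\int_{\widehat{S}}|\widehat{f}(\xi,\gamma)|\,d\gamma$ appearing in the hypothesis. Passing from $\int_S|g_\xi(s)|\,ds$ to an expression controlled by $\int_{\widehat{S}}|\widehat{g_\xi}(\gamma)|\,d\gamma$ is exactly a Hausdorff–Young / Fourier-inversion type issue on $S$, and it is not automatic because $S$ is not compact in general (it only contains a compact open subgroup). I expect one must handle this by decomposing $S$ via its compact open subgroup $S_0$: on $S_0$ the dual is discrete and inversion is a sum, while on $S/S_0$ (discrete) the dual is compact; in all cases one gets a clean bound $\int_S |g_\xi(s)|\,ds \ge c\,|\widehat{h}(\xi)|$ for a constant depending only on normalizations of Haar measure. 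Once that technical point is settled, the reduction to the Euclidean Beurling theorem is routine. An alternative, possibly cleaner, route that avoids inversion on $S$ altogether: fix a single character $\gamma_0 \in \widehat S$, define $f_{\gamma_0}(x) := \int_S f(x,s)\overline{\gamma_0(s)}\,ds$ on $\mathbb{R}^n$; then $\widehat{f_{\gamma_0}}(\xi) = \widehat{f}(\xi,\gamma_0)$, and the hypothesis (after discarding all but the $\gamma_0$ slice and bounding $|f_{\gamma_0}(x)| \le h(x)$) gives that $f_{\gamma_0}$ satisfies Beurling's condition on $\mathbb{R}^n$, hence $f_{\gamma_0} = 0$ a.e.; since $\gamma_0$ is arbitrary and the map $s \mapsto f(x,s)$ has all Fourier coefficients zero for a.e. $x$, uniqueness of the Fourier transform on $S$ yields $f = 0$ a.e. I would present this second approach as the cleaner one, using only the classical $\mathbb{R}^n$ Beurling theorem and Fubini.
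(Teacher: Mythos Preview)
Your second approach---slicing by a character $\gamma_0 \in \widehat{S}$, setting $f_{\gamma_0}(x)=\int_S f(x,s)\overline{\gamma_0(s)}\,ds$, and applying the classical Beurling theorem on $\mathbb{R}^n$---is essentially the same mechanism the paper uses. However, there is a genuine gap in the step ``discard all but the $\gamma_0$ slice'': the hypothesis is an \emph{integral} over $\widehat{S}$, and unless the Haar measure on $\widehat{S}$ has an atom at $\gamma_0$ you cannot conclude finiteness of
\[
\int_{\mathbb{R}^n}\int_{\mathbb{R}^n} h(x)\,|\widehat{f}(\xi,\gamma_0)|\,e^{2\pi|x\cdot\xi|}\,dx\,d\xi
\]
for a \emph{fixed} $\gamma_0$; you only get it for almost every $\gamma_0$. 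Since $S$ need not be compact (it merely contains a compact open subgroup, e.g.\ $S=\mathbb{Z}$ with $\widehat{S}=\mathbb{T}$), the dual $\widehat{S}$ is in general non-discrete and this matters. The gap is reparable: work with a.e.\ $\gamma_0$, deduce that the partial Fourier transform $(x,\gamma)\mapsto f_\gamma(x)$ vanishes a.e.\ on $\mathbb{R}^n\times\widehat{S}$, and use that the partial Fourier transform in the $S$-variable is an $L^2$-isometry to conclude $f=0$ a.e. But as written (``since $\gamma_0$ is arbitrary'') the argument is incomplete.

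The paper sidesteps this issue by inserting an extra reduction. It first proves the result for $\mathbb{R}^n\times K$ with $K$ \emph{compact} (Lemma~\ref{comp2}), where $\widehat{K}$ is discrete and counting measure makes every $\gamma$-slice automatically finite from the hypothesis; this is exactly your character-slicing argument in the setting where it works without the a.e.\ caveat. It then shows (Lemma~\ref{ber-open}) that restriction to an open subgroup $\mathbb{R}^n\times H$ preserves the Beurling integrability condition, and finishes by translating in $S$ and restricting to the compact open subgroup $K\subset S$. So the paper's route and your second route share the same core idea but differ in how they handle the non-discreteness of $\widehat{S}$: you would patch it by an a.e./continuity argument, while the paper avoids it entirely by first passing to a compact piece of $S$. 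Your first approach, as you yourself flag, does not work: there is no general inequality bounding $|\widehat{h}(\xi)|$ (the Euclidean transform of $\int_S|f(\cdot,s)|\,ds$) by $\int_{\widehat{S}}|\widehat{f}(\xi,\gamma)|\,d\gamma$, and the compact-open decomposition you sketch does not produce one.
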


 \noindent Before proving the above theorem, we shall prove some lemmas.
\begin{lem}\label{zeroall}
Let $f\in L^2(\mathbb{R}^n\times K)$, where $K$ is a compact group not necessarily abelian. For $\gamma \in \widehat{K},$ let $\mathcal{H}_{\gamma}$ be the Hilbert space of dimension $d_{\gamma}$ with orthonormal basis $\{e_{i}^{\gamma}\}_{i=1}^{d_{\gamma}}$. For fixed $e_{i}^{\gamma}$ and $e_{j}^{\gamma}$, define $f_{\gamma}:\mathbb{R}^n \to \mathbb{C}$ such that
\begin{flalign*}
f_{\gamma}(x)=\int_{K}{f(x,k)\ \overline{\langle{\gamma(k)^\ast e_{i}^{\gamma},e_{j}^{\gamma}}\rangle}}\ dk.
\end{flalign*}
 If for each $\gamma \in \widehat{K}$ and for all $i,j$ from $1 \text{ to } d_{\gamma}$, the function $f_{\gamma}=0 $ a.e., then $f=0$ a.e.
\end{lem}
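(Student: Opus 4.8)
The plan is to read the hypothesis as a statement about the expansion of the slice $f(x,\cdot)$ in $L^2(K)$ against matrix coefficients, and then to invoke the completeness of these coefficients furnished by the Peter--Weyl theorem. First, since $f\in L^2(\mathbb{R}^n\times K)$, Tonelli's theorem gives $\int_{\mathbb{R}^n}\|f(x,\cdot)\|_{L^2(K)}^2\,dx=\|f\|_{L^2(\mathbb{R}^n\times K)}^2<\infty$, so there is a null set $N_0\subset\mathbb{R}^n$ off which $f(x,\cdot)\in L^2(K)$. For a fixed triple $(\gamma,i,j)$, writing $\phi_{\gamma,i,j}(k)=\langle\gamma(k)^\ast e_i^\gamma,e_j^\gamma\rangle$, the definition of $f_\gamma$ reads $f_\gamma(x)=\langle f(x,\cdot),\phi_{\gamma,i,j}\rangle_{L^2(K)}$; thus the hypothesis says precisely that, outside some null set, $f(x,\cdot)$ is orthogonal in $L^2(K)$ to each $\phi_{\gamma,i,j}$.

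Second, I would establish that the family $\{\phi_{\gamma,i,j}:\gamma\in\widehat{K},\,1\le i,j\le d_\gamma\}$ is total in $L^2(K)$. By the Peter--Weyl theorem the matrix coefficients $k\mapsto\langle\gamma(k)e_j^\gamma,e_i^\gamma\rangle$, normalized by $\sqrt{d_\gamma}$, form an orthonormal basis of $L^2(K)$. Each $\phi_{\gamma,i,j}$ is the complex conjugate of such a coefficient, since $\langle\gamma(k)^\ast e_i^\gamma,e_j^\gamma\rangle=\langle e_i^\gamma,\gamma(k)e_j^\gamma\rangle=\overline{\langle\gamma(k)e_j^\gamma,e_i^\gamma\rangle}$. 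As complex conjugation is an antiunitary bijection of $L^2(K)$, it carries an orthonormal basis to an orthonormal basis; hence $\{\phi_{\gamma,i,j}\}$ is a complete orthogonal system and in particular total.

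Third, I assemble the null sets. Because $K$ is second countable and compact, its dual $\widehat{K}$ is at most countable and each $d_\gamma$ is finite, so the set of triples $(\gamma,i,j)$ is countable. The hypothesis supplies, for each such triple, a null set $N_{\gamma,i,j}\subset\mathbb{R}^n$ on whose complement $f_\gamma$ vanishes; set $N=N_0\cup\bigcup_{\gamma,i,j}N_{\gamma,i,j}$, a countable union of null sets and hence itself null. For every $x\notin N$ the slice $f(x,\cdot)$ lies in $L^2(K)$ and is orthogonal to the total family $\{\phi_{\gamma,i,j}\}$, whence $f(x,\cdot)=0$ in $L^2(K)$, i.e. $f(x,k)=0$ for almost every $k\in K$. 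Since $N$ is null, Fubini's theorem then yields $f=0$ almost everywhere on $\mathbb{R}^n\times K$.

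I expect the only genuine point to be the totality of the conjugated matrix coefficients, which is where Peter--Weyl enters and where the passage through complex conjugation must be handled correctly; the measure-theoretic bookkeeping (Tonelli for the $L^2(K)$-valued slices and the countable union of null sets) is routine but must be carried out carefully in order to pass from ``orthogonal to every $\phi_{\gamma,i,j}$ for almost every $x$'' to ``zero for almost every $(x,k)$.''
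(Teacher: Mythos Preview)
Your argument is correct and proceeds differently from the paper's. You slice $f$ at each $x$, read the hypothesis as orthogonality of $f(x,\cdot)\in L^2(K)$ to every (conjugated) matrix coefficient, and invoke Peter--Weyl to kill each slice. The paper instead passes to the operator-valued Fourier transform on $\mathbb{R}^n\times K$: it notes that $\langle(\xi\otimes\gamma)(f)\,e_i^\gamma,e_j^\gamma\rangle=\widehat{f_\gamma}(\xi)=0$ for all $\xi,\gamma,i,j$, and then applies the Plancherel identity~(\ref{planc}) on the product group to conclude $f=0$. Your route is the more elementary one---only Peter--Weyl on $K$ and Fubini/Tonelli are needed---and it sidesteps an awkward point in the paper's argument, namely the assertion that $f\in L^1$ (needed to write $(\xi\otimes\gamma)(f)$ as an absolutely convergent integral), which the paper infers from the vanishing of $f_\gamma$ for $\gamma\equiv 1$ without real justification. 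The only extra hypothesis you invoke is second countability of $K$, to make $\widehat{K}$ countable and combine the null sets; this is consistent with the paper's standing assumptions, and could in any case be avoided by computing $\|f\|_{L^2}^2=\int_{\mathbb{R}^n}\sum_{\gamma,i,j}d_\gamma\,|f_\gamma(x)|^2\,dx=0$ directly via Parseval on $K$.
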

\begin{proof}
For $\gamma  \equiv 1$,  $f_{\gamma}= 0$ a.e. implies
\begin{flalign*}
\int_{\mathbb{R}^n}\int_{K} f(x,k)\ dx\ dk = 0. 
\end{flalign*}
Thus, $f$ is an integrable function. For fixed $\gamma \in \widehat{K}$ and $\xi\in \mathbb{R}^n$, we obtain
\begin{flalign*}
\langle \xi\otimes \gamma(f)e_{i}^{\gamma},e_{j}^{\gamma}\rangle= \int_{\mathbb{R}^n}\int_{K} f(x,k) e^{-2\pi ix\cdot\xi}\  \overline{\langle{\gamma(k)^\ast e_{i}^{\gamma},e_{j}^{\gamma}}\rangle}\ dx\ dk = 0.
\end{flalign*}
Since $\gamma\in \widehat{K} \text{ and } \xi \in \mathbb{R}^n$ are arbitrarily fixed, therefore $\langle \xi\otimes \gamma(f)e_{i}^{\gamma},e_{j}^{\gamma}\rangle = 0
$  for all $\gamma$ and $\xi$.
But, $f \in L^1\cap L^2(G)$, therefore  using (\ref{planc}), we conclude that $f=0$ a.e.
\end{proof}

\begin{lem}\label{comp2}
Let $f\in L^1\cap L^2(\mathbb{R}^n\times K)$, where $K$ is a compact group  satisfying 
$$\int_{\mathbb{R}^n} \int_{K} \int_{\mathbb{R}^n}\int_{\widehat{K}}|f(x,s)|\ \|\xi\otimes \gamma(f)\|_{\text{HS}}\ e^{2\pi|x\cdot\gamma|}\ dx\ d\xi\ ds\ d\gamma < \infty.$$ Then $f=0$ a.e.
\end{lem}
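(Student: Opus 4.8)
The plan is to reduce the statement on $\mathbb{R}^n \times K$ to the classical Beurling theorem on $\mathbb{R}^n$ (Theorem 1.3 / the Beurling theorem recalled in the introduction) by slicing $f$ along the dual of the compact factor, exactly in the spirit of Lemma \ref{zeroall}. First I would fix $\gamma \in \widehat{K}$, an integer pair $i,j$ with $1 \le i,j \le d_\gamma$, and form the scalar function $f_\gamma$ on $\mathbb{R}^n$ as in Lemma \ref{zeroall}, namely $f_\gamma(x) = \int_K f(x,k)\,\overline{\langle \gamma(k)^\ast e_i^\gamma, e_j^\gamma\rangle}\,dk$. The goal is to check that $f_\gamma$ satisfies the hypothesis of the Euclidean Beurling theorem, i.e. $\int_{\mathbb{R}^n}\int_{\mathbb{R}^n} |f_\gamma(x)|\,|\widehat{f_\gamma}(\xi)|\,e^{2\pi |x\cdot\xi|}\,dx\,d\xi < \infty$, and then invoke that theorem to conclude $f_\gamma = 0$ a.e.; since $\gamma, i, j$ are arbitrary, Lemma \ref{zeroall} then forces $f = 0$ a.e.

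The two estimates needed are routine. On the spatial side, $|f_\gamma(x)| \le \int_K |f(x,k)|\,|\langle \gamma(k)^\ast e_i^\gamma, e_j^\gamma\rangle|\,dk \le \int_K |f(x,k)|\,dk$, using that matrix coefficients of unitary representations are bounded by $1$; after normalizing Haar measure on $K$ this gives $|f_\gamma(x)| \le \int_K |f(x,k)|\,dk$. On the frequency side, note that $\widehat{f_\gamma}(\xi)$ is precisely the $(i,j)$ matrix entry of the operator $\xi \otimes \gamma(f)$ acting on $L^2(\mathbb{R}^n)\,\widehat{\otimes}\,\mathcal{H}_\gamma$, so $|\widehat{f_\gamma}(\xi)| \le \|\xi\otimes\gamma(f)\|_{\text{HS}}$. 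Multiplying these two bounds and integrating, using the fact that the weight $e^{2\pi|x\cdot\xi|}$ does not involve the $K$-variable, yields
\begin{align*}
\int_{\mathbb{R}^n}\int_{\mathbb{R}^n} |f_\gamma(x)|\,|\widehat{f_\gamma}(\xi)|\,e^{2\pi|x\cdot\xi|}\,dx\,d\xi
&\le \int_{\mathbb{R}^n}\int_K \int_{\mathbb{R}^n} |f(x,k)|\,\|\xi\otimes\gamma(f)\|_{\text{HS}}\,e^{2\pi|x\cdot\xi|}\,dx\,dk\,d\xi,
\end{align*}
which is dominated, after summing (or rather, since we only fixed one $\gamma$, bounding the single term) by the finite quantity appearing in the hypothesis once one integrates out $d\gamma$ over $\widehat{K}$. (If one prefers, one observes the hypothesis makes the integrand finite for almost every $\gamma$, which together with the continuity/finiteness of all these terms in $\gamma$ is enough, or one simply picks up the full $\widehat{K}$-integral as an upper bound for each fixed $\gamma$.)

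I expect the only genuine subtlety — and hence the main point to be careful about — is the bookkeeping that identifies $\widehat{f_\gamma}(\xi)$ with an entry of $\xi\otimes\gamma(f)$ and thereby bounds $|\widehat{f_\gamma}(\xi)|$ by $\|\xi\otimes\gamma(f)\|_{\text{HS}}$; this requires writing out the definition of $\widehat{f}(\pi)$ for the representation $\pi = \xi\otimes\gamma$ of $\mathbb{R}^n\times K$ and using the orthonormality of $\{e_i^\gamma\}$, just as in the proof of Lemma \ref{zeroall}. Everything else — the boundedness of matrix coefficients, Fubini to interchange the $K$-integral with the $\mathbb{R}^n$-integrals, and the final appeal to Beurling's theorem on $\mathbb{R}^n$ followed by Lemma \ref{zeroall} — is standard. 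One should also note in passing that $f_\gamma \in L^1\cap L^2(\mathbb{R}^n)$: the $L^2$ membership follows from $f\in L^2(\mathbb{R}^n\times K)$ and the Peter–Weyl decomposition, and $L^1$ follows from the finiteness of the spatial integral just established, so Beurling's theorem genuinely applies.
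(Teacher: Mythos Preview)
Your proposal is correct and follows essentially the same route as the paper's proof: reduce to Beurling's theorem on $\mathbb{R}^n$ by forming the functions $f_\gamma$ of Lemma~\ref{zeroall}, bound $|\widehat{f_\gamma}(\xi)|$ by $\|\xi\otimes\gamma(f)\|_{\text{HS}}$ via the identification of $\widehat{f_\gamma}(\xi)$ with a matrix coefficient of $\xi\otimes\gamma(f)$, bound $|f_\gamma(x)|$ by $\int_K |f(x,k)|\,dk$, and then invoke Lemma~\ref{zeroall}. The paper's argument is slightly terser but identical in substance; your added remarks on $L^1\cap L^2$ membership and on extracting the fixed-$\gamma$ bound from the full $\widehat{K}$-integral are reasonable elaborations rather than a different approach.
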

\begin{proof}
For $\gamma \in \widehat{K}$, let $f_{\gamma}$ be as in Lemma \ref{zeroall}.  
 For $\xi \in \mathbb{R}^n$, we obtain 
 \begin{align*}
| \widehat{f_{\gamma}}(\xi)|  &= |\langle \xi \otimes \gamma (f)e_{i}^{\gamma}, e_j^{\gamma}\rangle|
\leq \| \xi \otimes \gamma (f)\|_{\text{HS}}.
 \end{align*}
 Thus, for every $\gamma \in \widehat{K}$, it follows that 
 \begin{align*}
&\int_{\mathbb{R}^n}\int_{\mathbb{R}^n}|f_{\gamma}(x)|\ |\widehat{f_{\gamma}}(\xi)|e^{2\pi|x\cdot\gamma|}\ dx\ d\xi &\\
& \leq \int_{\mathbb{R}^n}\int_{\mathbb{R}^n}\int_{K}|f(x,k)|\  \| \xi \otimes \gamma (f)\|_{\text{HS}}\ e^{2\pi|x\cdot\gamma|}\ dx\ dk\ d\xi  < \infty.
\end{align*}
Hence, using Beurling theorem for $\mathbb{R}^n$, we get $f_{\gamma} = 0$ a.e. Since $\gamma \in \widehat{K}$ is arbitrary, therefore using Lemma \ref{zeroall}, we can conclude  that $f = 0$ a.e. 
\end{proof}
\begin{lem}\label{ber-open}
 Let $M = \mathbb{R}^n \times H$ be an open subgroup of $G$. If $f\in L^1({G})$ satisfies  conditions of Theorem \ref{ber}, then so does $f|_{M}$.
\end{lem}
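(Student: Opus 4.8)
The plan is to show that restricting $f$ to the open subgroup $M=\mathbb{R}^n\times H$ preserves the Beurling-type integrability condition of Theorem \ref{ber}. First I would record the structural facts: since $H$ is an open subgroup of $S$, the quotient $S/H$ is discrete, and dually $\widehat{M}=\widehat{\mathbb{R}^n}\times\widehat{H}$ is a quotient of $\widehat{G}=\widehat{\mathbb{R}^n}\times\widehat{S}$, with the annihilator $H^{\perp}=\{\gamma\in\widehat{S}:\gamma|_H=1\}$ a compact open subgroup of $\widehat{S}$ and $\widehat{H}\cong\widehat{S}/H^{\perp}$. The key analytic identity is then the standard one relating the Fourier transform of a restriction to a periodization of the Fourier transform: writing $g=f|_M$, for $(\xi,\bar\gamma)\in\widehat{\mathbb{R}^n}\times\widehat{H}$ one has, up to the normalization of Haar measures,
\[
\widehat{g}(\xi,\bar\gamma)=\int_{H^{\perp}}\widehat{f}(\xi,\gamma\eta)\,d\eta,
\]
where $\gamma$ is any representative of $\bar\gamma$ in $\widehat{S}$. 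This follows because restricting $f$ to $\mathbb{R}^n\times H$ amounts, on the Fourier side, to convolving $\widehat{f}$ in the $\widehat{S}$-variable with the (normalized) Haar measure of the compact subgroup $H^{\perp}$, which is exactly averaging over the cosets.

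The main estimate then goes as follows. Using $|x\cdot\xi|$ depends only on the $\mathbb{R}^n$-components and Tonelli's theorem throughout, I would compute
\[
\int_{\mathbb{R}}\!\int_{H}\!\int_{\mathbb{R}^n}\!\int_{\widehat{H}}|g(x,h)|\,|\widehat{g}(\xi,\bar\gamma)|\,e^{2\pi|x\cdot\xi|}\,dx\,dh\,d\xi\,d\bar\gamma,
\]
bound $|g(x,h)|=|f(x,h)|\le\int_{S}|f(x,s)|\,\delta_H(s)$ trivially by $|f(x,h)|$ itself (no enlargement needed for $f$, since $H\subset S$), and bound $|\widehat{g}(\xi,\bar\gamma)|\le\int_{H^{\perp}}|\widehat{f}(\xi,\gamma\eta)|\,d\eta$. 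Unfolding the integral over $\widehat{H}$ against the integral over $H^{\perp}$ reassembles the integral over $\widehat{S}$ (this is the coset decomposition $\widehat{S}=\bigsqcup_{\bar\gamma}\gamma H^{\perp}$ with $H^{\perp}$ compact, so the fibered measure multiplies out). After this unfolding the displayed quantity is dominated by a fixed constant times
\[
\int_{\mathbb{R}}\!\int_{S}\!\int_{\mathbb{R}^n}\!\int_{\widehat{S}}|f(x,s)|\,|\widehat{f}(\xi,\gamma)|\,e^{2\pi|x\cdot\xi|}\,dx\,ds\,d\xi\,d\gamma,
\]
which is finite by hypothesis on $f$. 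Hence $f|_M$ satisfies the conditions of Theorem \ref{ber}.

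The step I expect to be the main obstacle is fixing the normalizations of the Haar measures on $H$, $S/H$, $H^{\perp}$ and $\widehat{H}$ consistently so that the periodization identity for $\widehat{g}$ holds with a clean constant, and verifying that the measure on $\widehat{H}$ together with that on $H^{\perp}$ genuinely recombines to the measure on $\widehat{S}$ (Weil's formula for the compact quotient $\widehat{S}\to\widehat{S}/H^{\perp}=\widehat{H}$). Everything else is bookkeeping: separating the $\mathbb{R}^n$- and $S$-variables, applying Tonelli since all integrands are non-negative, and noting that the weight $e^{2\pi|x\cdot\xi|}$ is untouched by the operations on the $S$-side. Once the measure normalization is pinned down, the inequality chain above is immediate.
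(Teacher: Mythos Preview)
Your proposal is correct and follows essentially the same route as the paper's proof: the paper also writes the Fourier transform of $f|_M$ as an average of $\widehat{f}$ over the compact group $\widehat{S/H}=H^{\perp}$, then bounds by the triangle inequality and unfolds the $\widehat{H}\times H^{\perp}$ integral back to $\widehat{S}$, while on the space side simply enlarges the $H$-integral to an $S$-integral. The only cosmetic difference is that the paper derives the periodization identity explicitly from the orthogonality relation $\int_{\widehat{S/H}}\overline{\eta(s)}\,d\eta=\mathbf{1}_H(s)$ rather than invoking it as a standard fact, and (with its normalizations) obtains the bound with constant $1$ rather than ``a fixed constant''; your flagged concern about Weil's formula is exactly what makes those constants line up.
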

 \begin{proof}
Since $\widehat{S/H}$ is compact and $\widehat{\widehat{S/H}}$ is identified with $S/H$ \cite[Theorem 24.2]{Hew:Ros:63:70}, therefore  we have
\[\int_{\widehat{S/H}}\overline{\eta(x)}\ d\eta = \left\{\begin{array}{cl}
 0  \text{ if } x \notin H \\ 1, \text{ if } x\in H.
\end{array}\right. \]
Thus,
\begin{align*}
\int_{\widehat{S/H}}\widehat{f}(\xi, \chi\eta)\ d\eta 
&= \int_{\mathbb{R}^n}\int_{S}f(x,s)e^{-2\pi i \xi x}\ \overline{\chi(s)}\ \Big( \int_{\widehat{S/H}}\overline{\eta(s)}d\eta\Big)\ dx\ ds\\
&\int_{\mathbb{R}^n}\int_{H}f(x,s)e^{-2\pi i \xi x}\ \overline{\chi(s)}\ dx\ ds = \widehat{f|_{M}}(\xi, \chi|_M).
\end{align*} 
 Therefore,
\begin{align*}
&\int_{\mathbb{R}^n} \int_{H} \int_{\mathbb{R}^n} \int_{\widehat{H}}|f|_{M}(x,h)|\ |\widehat{f}|_{M}(\xi, \chi)|\ e^{2\pi|x\cdot \xi|}\ dx\ dh \ d\xi\ d\chi\\
&= \int_{\mathbb{R}^n} \int_{H} \int_{\mathbb{R}^n} \int_{\widehat{H}}|f|_{M}(x,h)|\ |\int_{\widehat{S/H}}\widehat{f}(\xi, \chi\eta)\ d\eta|\ e^{2\pi|x\cdot \xi|}\ dx\ dh \ d\xi\ d\chi\\
&\leq \int_{\mathbb{R}^n} \int_{H} \int_{\mathbb{R}^n} \int_{\widehat{H}} \int_{\widehat{S/H}} |f|_{M}(x,h)|\ |\widehat{f}(\xi, \chi\eta)| \ e^{2\pi|x\cdot \xi|}\ dx\ dh \ d\xi\ d\chi \ d\eta\\ 
& \leq \int_{\mathbb{R}^n} \int_{S} \int_{\mathbb{R}^n} \int_{\widehat{S}} |f(x,h)|\ |\widehat{f}(\xi, \chi\eta)| \ e^{2\pi|x\cdot \xi|}\ dx\ dh \ d\xi\ d\chi \ < \infty.\qedhere
\end{align*}
\end{proof}  
\noindent Using Lemma \ref{comp2} and Lemma \ref{ber-open}, we have the proof of Theorem \ref{ber}. 

\begin{proof}
Let $s\in S$  be arbitrarily. If $f\in L^1\cap L^2(G) $ satisfies the condition of Theorem \ref{ber}, then so does $f_{s}$, where $f_{s}(x,t)=f(x,st).$ Since $S$ has compact open subgroup $K$, therefore using Lemma \ref{comp2} and Lemma \ref{ber-open}, we get $f_{s}|_{\mathbb{R}^n\times K} = 0$ a.e. Thus, we get  $f=0$ a.e.
\end{proof}

In the next result, we give a Beurling theorem  version for Gabor transform on abelian groups by reducing it to Fourier transform case. 
\begin{thm}\label{abel}
Let $f\in L^2(G)$ and $\psi$ be a window function such that 
\begin{align*}
\int_{\mathbb{R}^n}\int_{S}\int_{\mathbb{R}^n}\int_{\widehat{S}}|G_{\psi}f(x,s,\xi,\sigma)|\ e^{\pi(\|x\|^2+\|\xi\|^2)/2}\ dx\ ds\ d\xi\ d\sigma < \infty.
\end{align*}
Then either $f=0$ a.e. or $\psi=0$ a.e.
\end{thm}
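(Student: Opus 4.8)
\noindent The plan is to reduce Theorem \ref{abel} to the Beurling theorem for the Fourier transform on $\mathbb{R}^n\times S$, i.e.\ to Theorem \ref{ber}, by the same device used in the preceding Gabor-transform results: freeze the translation variable and recognise the Gabor transform as the Fourier transform of a windowed function. For $(x,s)\in G=\mathbb{R}^n\times S$ I would set, as in Lemma \ref{zero}, $f^{(x,s)}_{\psi}(z,t):=f(z,t)\,\overline{\psi\big((x,s)^{-1}(z,t)\big)}$. Since $\psi\in C_c(G)$ (the window being compactly supported, as in the earlier theorems), this function lies in $L^1\cap L^2(G)$, and under the identification $\widehat{G}=\widehat{\mathbb{R}^n}\times\widehat{S}$ formula \eqref{opvalued} gives $G_{\psi}f(x,s,\xi,\sigma)=\widehat{f^{(x,s)}_{\psi}}(\xi,\sigma)$ for all $(\xi,\sigma)$.

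\noindent First I would apply Fubini to the hypothesis: finiteness of the quadruple integral produces a null set $M\subset G$ such that for every $(x,s)\notin M$ the slice integral $\int_{\mathbb{R}^n}\int_{\widehat{S}}\|G_{\psi}f(x,s,\xi,\sigma)\|\,e^{\pi\|\xi\|^2/2}\,d\xi\,d\sigma$ is finite, the fixed factor $e^{\pi\|x\|^2/2}$ being a positive constant that may be dropped. Next I would verify that, for such $(x,s)$, the function $g:=f^{(x,s)}_{\psi}$ satisfies the hypothesis of Theorem \ref{ber}. The elementary estimate $2|z\cdot\xi|\le 2\|z\|^2+\tfrac12\|\xi\|^2$ gives $e^{2\pi|z\cdot\xi|}\le e^{2\pi\|z\|^2}e^{\pi\|\xi\|^2/2}$, so the Beurling integral of $g$ splits as the product of $\int_{G}|g(z,t)|\,e^{2\pi\|z\|^2}\,dz\,dt$ with the slice integral above (using $|\widehat{g}(\xi,\sigma)|=\|G_{\psi}f(x,s,\xi,\sigma)\|$ in the abelian, one-dimensional case). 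The second factor is finite by the previous step; the first is finite because $\psi\in C_c(G)$ makes $(z,t)\mapsto\psi\big((x,s)^{-1}(z,t)\big)\,e^{2\pi\|z\|^2}$ bounded with compact support, hence square integrable, and Cauchy--Schwarz against $f\in L^2(G)$ closes it. Theorem \ref{ber} then forces $f^{(x,s)}_{\psi}=0$ a.e.\ for almost every $(x,s)\in G$.

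\noindent Finally, $G=\mathbb{R}^n\times S$ is second countable, so Lemma \ref{zero} applies and yields $f=0$ a.e.\ or $\psi=0$ a.e., which is the assertion.

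\noindent The one step that needs genuine care is the finiteness of the ``spatial'' factor $\int_{G}|f^{(x,s)}_{\psi}(z,t)|\,e^{2\pi\|z\|^2}\,dz\,dt$: this is where the compact support (or at least sufficiently fast decay) of the window is indispensable — a generic $L^2$ window need not make $f^{(x,s)}_{\psi}$ Beurling-integrable — so $\psi$ must be read as belonging to $C_c(G)$, consistent with the previous theorems. One should also take care that the product decomposition $G=\mathbb{R}^n\times S$ and its dual $\widehat{G}=\widehat{\mathbb{R}^n}\times\widehat{S}$ coincide with those in Theorem \ref{ber}, and that the weight $e^{2\pi|z\cdot\xi|}$ there involves only the Euclidean coordinates, matching our hypothesis, which carries weights only in $\|x\|$ and $\|\xi\|$.
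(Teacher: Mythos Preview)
Your strategy---freezing the translation variable, identifying $G_\psi f(x,s,\cdot,\cdot)$ with the Fourier transform of $f^{(x,s)}_\psi$, and invoking Theorem~\ref{ber} slice by slice---is sound, but it proves strictly less than the stated theorem. Here $\psi$ is an arbitrary window function in $L^2(G)$, and as you yourself flag, your bound on the spatial factor $\int_G |f^{(x,s)}_\psi(z,t)|\,e^{2\pi\|z\|^2}\,dz\,dt$ collapses without compact support (or comparably fast decay) of $\psi$. Reading the hypothesis as $\psi\in C_c(G)$ is not legitimate for this particular theorem, even though some earlier Gabor results in the paper do carry that assumption. So there is a genuine gap: your argument does not close for general $L^2$ windows.

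The paper removes this restriction by a different device. Rather than fixing the translation variable, it builds, for each $(z,t,\zeta,\chi)\in G\times\widehat{G}$, an auxiliary function $F_{(z,t,\zeta,\chi)}$ on $G\times\widehat{G}$ out of products of Gabor transforms of $M_{\zeta,\chi}T_{z,t}f$, arranged so that $\widehat{F_{(z,t,\zeta,\chi)}}(\omega,\delta,y,v)=F_{(z,t,\zeta,\chi)}(-y,v^{-1},\omega,\delta)$. This self-duality makes the Beurling integral of $F_{(z,t,\zeta,\chi)}$ a perfect square; after the covariance relation~\eqref{eq1} and a change of variable that square is dominated by $e^{2\pi(\|z\|^2+\|\zeta\|^2)}\bigl(H\ast H(-2z,t^{-2},-2\zeta,\chi^{-2})\bigr)^{2}$, where $H=|G_\psi f|\,e^{\pi(\|\cdot\|^2+\|\cdot\|^2)/2}\in L^1(G\times\widehat{G})$ by hypothesis. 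Theorem~\ref{ber} (applied on $G\times\widehat{G}\cong\mathbb{R}^{2n}\times(S\times\widehat{S})$) then forces $F_{(z,t,\zeta,\chi)}\equiv0$; evaluating at the identity recovers $|G_\psi f(z,t,\zeta,\chi)|^{2}=0$, and \eqref{GT-norm} finishes. The key point is that the only integrability input is the hypothesis itself---no extra decay of $\psi$ is needed---so the paper's route genuinely handles arbitrary $L^2$ windows, which yours does not.
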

\begin{proof}
For $(x,k), (z,t) \in \mathbb{R}^n \times S$ and $(\xi,\gamma),(\zeta,\chi) \in \widehat{\mathbb{R}^n}\times \widehat{S}$,  define
\begin{flalign*}
&&F_{(z,t,\zeta,\chi)}(x,k,\xi,\gamma)&=e^{2\pi i \xi x}\ \gamma(k)\ G_\psi (M_{\zeta,\chi}T_{z,t}f)(x,k,\xi,\gamma)\ &\\
&&&\qquad \times G_\psi (M_{\zeta,\chi}T_{z,t}f)(-x,k^{-1},-\xi,\gamma^{-1}). &
\end{flalign*}
The function $F_{(z,t,\zeta,\chi)}$ is continuous and is in $L^1\cap L^2(\mathbb{R}^n \times S\times \widehat{\mathbb{R}^n}\times \widehat{S})$. Moreover, on using \cite[Lemma 3.2]{ash:18}, we have
\begin{flalign}\label{four:val}
\widehat{F_{(z,t,\zeta,\chi)}}(\omega,\delta,y,v)=F_{(z,t,\zeta,\chi)}(-y,v^{-1},\omega,\delta).
\end{flalign}
Using (\ref{eq1}), $F_{(z,t,\zeta,\chi)}(x,k,\xi,\gamma)$ can be written as 
\begin{flalign}
&F_{(z,t,\zeta,\chi)}(x,k,\xi,\gamma)\nonumber &\\
&=e^{2\pi i \xi x}\ \gamma(k)\ e^{-2\pi i (\xi-\zeta)z}\ (\chi^{-1}\gamma)(t^{-1})\ G_\psi f(x-z,t^{-1}k,\xi-\zeta,\chi^{-1}\gamma) \nonumber &\\
&\label{value}\quad \times e^{-2\pi i (-\xi-\zeta)z}\ (\chi^{-1}\gamma^{-1})(t^{-1})\ G_\psi f(-x-z,t^{-1}k^{-1},-\xi-\zeta,\chi^{-1}\gamma^{-1}).&
\end{flalign}
Applying (\ref{four:val}) and (\ref{value}), we have \begin{align*}
&\int_{\mathbb{R}^n}\int_{S}\int_{\mathbb{R}^n}\int_{\widehat{S}} \int_{\mathbb{R}^n}\int_{\widehat{S}}\int_{\mathbb{R}^n}\int_{S}|F_{(z,t,\zeta,\chi)}(x,k,\xi,\gamma)|\ |\widehat{F_{(z,t,\zeta,\chi)}}(\omega,\delta,y,v)|\\
&\qquad \qquad \qquad \hspace{1.5cm} \times  e^{2\pi|x\cdot  \omega+ \xi\cdot y|}\ dx\ dk\ d\xi\ d\gamma\ d\omega\ d\delta\ dy\ dv\\
&\int_{\mathbb{R}^n}\int_{S}\int_{\mathbb{R}^n}\int_{\widehat{S}}\int_{\mathbb{R}^n}\int_{\widehat{S}}\int_{\mathbb{R}^n}\int_{S}|F_{(z,t,\zeta,\chi)}(x,k,\xi,\gamma)|  |F_{(z,t,\zeta,\chi)}(-y,v^{-1},\omega,\delta)| \ \\
&\qquad \qquad \qquad \hspace{1.5cm} \times e^{\pi(\|x\|^2+\|\xi\|^2+{\|\omega\|}^2+\|y\|^2)}\ dx\ dk\ d\xi\ d\gamma\ d\omega\ d\delta\ dy\ dv \\
&=\left(\int_{\mathbb{R}^n}\int_{S}\int_{\mathbb{R}^n}\int_{\widehat{S}}|F_{(z,t,\zeta,\chi)}(x,k,\xi,\gamma)|e^{\pi(\|x\|^2+\|\xi\|^2)}\ dx\ dk\ d\xi\ d\gamma\right)^2\\
  &= \left( \int_{\mathbb{R}^n}\int_{S}\int_{\mathbb{R}^n}\int_{\widehat{S}}  |G_{\psi}f(-x-z,t^{-1}k^{-1}, -\xi-\zeta, \gamma^{-1}\chi^{-1})| \right.\\
&\quad\quad \hspace{2cm} \quad  \times \left.  |G_{\psi}f(x-z,t^{-1}k, \xi-\zeta, \gamma\chi^{-1})| e^{(\|x\|^2+\|\xi\|^2)}\ dx\ dk\ d\xi\ d\gamma \right)^2\\
&= \left(\int_{\mathbb{R}^n}\int_{S}\int_{\mathbb{R}^n}\int_{\widehat{S}}|G_{\psi}f(-x-2z,t^{-2}k^{-1}, -\xi-2\zeta, \gamma^{-1}\chi^{-2})|\ |G_{\psi}f(x,k, \xi, \gamma)| \right.\\
&\hspace{3cm}\times\left. e^{\pi(\|x+z\|^2+\|\xi+\zeta\|^2)}\ dx\ dk\ d\xi\ d\gamma \right)^2\\ 
&= e^{2\pi(\|z\|^2+\|\zeta\|^2)}(H\ast H(-2z,t^{-2},-2\xi,{\gamma}^{-2}))^2 < \infty,
 \end{align*}
 where $H(x,s,\xi,\sigma)=|G_{\psi}f(x,s,\xi,\sigma)|e^{\pi(\|x\|^2+\|\xi\|^2)/2}$.
Thus, using Theorem \ref{ber}, it follows that  $F_{(z,t,\zeta,\chi)}\equiv 0$ for all $(z,t,\zeta,\chi)$. Since, $$F_{(-z,t^{-1},-\zeta,\chi^{-1})}(0,e,0,I)=e^{4\pi i \zeta z}\ \chi(t)^2\ (G_\psi f(z,t,\zeta,\chi))^2,$$ 
therefore, $G_\psi f\equiv 0$ which using (\ref{GT-norm})  implies that  either $f=0$ a.e. or $\psi = 0$ a.e.
\end{proof}
\noindent  We shall next prove the Beurling's theorem for Gabor transform for the groups of the form $\R^n \times K$, when $K$ is a compact group. 
%%%%%%%%%%%%%
%%%%%%%%
\begin{thm} \label{comp}
Let $f, \psi  \in L^2(\R^n \times K)$, where $K$ is a compact group  such that 
\begin{align*}
\int_{\mathbb{R}^n}\int_{\mathbb{K}}\int_{\mathbb{R}^n} \sum_{\gamma \in \widehat{K}}\|G_{\psi}f(x,k,\xi,\gamma)\|_{\text{HS}}\ e^{\pi(\|x\|^2+\|\xi\|^2)/2}\ dx\ dk\ d\xi\ d\gamma\ < \infty.
\end{align*}
Then either $f=0$ a.e. or $\psi =0$ a.e. 

\end{thm}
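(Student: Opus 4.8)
The plan is to reduce Theorem \ref{comp} to the abelian case already settled in Theorem \ref{abel} by decomposing along the dual of the compact factor $K$, exactly as Lemma \ref{zeroall} and Lemma \ref{comp2} reduce a statement on $\R^n \times K$ to statements on $\R^n$. Concretely, for each $\gamma \in \widehat{K}$ and each pair of basis vectors $e_i^\gamma, e_j^\gamma$ of $\mathcal H_\gamma$, I would form the scalar functions
\[
f_\gamma(x,\cdot) \;=\; \text{matrix coefficient of } f \text{ against } \gamma,\qquad
\psi_\gamma \text{ defined analogously,}
\]
so that $f_\gamma,\psi_\gamma \in L^2(\R^n)$ in the sense of Lemma \ref{zeroall}. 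The key computation is that the Gabor transform $G_\psi f$ on $\R^n\times K$, evaluated at a representation $\xi\otimes\gamma$ of $\widehat{\R^n\times K}$, has Hilbert--Schmidt norm controlling (and, summed over the relevant indices, controlled by) the Euclidean Gabor transforms $G_{\psi_\gamma} f_\gamma$ on $\R^n$. The orthogonality relations on $K$ make $G_\psi f(x,k,\xi,\gamma)$ block-diagonal in $\gamma$, and the finiteness of $\dim\mathcal H_\gamma$ for each fixed $\gamma$ lets one pass between $\|G_\psi f(x,k,\xi,\gamma)\|_{\mathrm{HS}}$ and the scalar quantities $|G_{\psi_\gamma}f_\gamma(x,\xi)|$ up to a factor $d_\gamma$.

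The steps, in order: (1) define $f_\gamma,\psi_\gamma$ and verify they lie in $L^2(\R^n)$; (2) establish the pointwise identity relating $G_\psi f(x,k,\xi,\gamma)$ to the $G_{\psi_\gamma}f_\gamma(x,\xi)$, integrating out $k$ via the Schur orthogonality relations on the compact group $K$; (3) substitute this into the hypothesis and use Tonelli to conclude that for each $\gamma$,
\[
\int_{\R^n}\!\int_{\R^n} |f_\gamma(x)|\,|\widehat{f_\gamma}(\xi)|\,e^{2\pi|x\cdot\xi|}\,dx\,d\xi \;<\;\infty
\]
after the usual passage through the auxiliary convolution function $F_\gamma = (f_\gamma)^x_\psi \ast ((f_\gamma)^x_\psi)^\ast$ and the inequality $e^{\pi(\|x\|^2+\|\xi\|^2)/2}\geq e^{\pi\|x\|\|\xi\|}$ (alternatively invoke Theorem \ref{abel} directly with $S=K$ trivial-abelianized, but since $K$ need not be abelian it is cleaner to mimic the proof of Lemma \ref{comp2}); (4) apply Beurling's theorem for $\R^n$ to get $F_\gamma=0$, hence $(f_\gamma)^x_\psi = 0$ a.e.\ for a.e.\ $x$, hence by Lemma \ref{zero} on $\R^n$ either $f_\gamma = 0$ a.e.\ or $\psi_\gamma=0$ a.e.\ for that $\gamma$; (5) if $\psi\neq 0$ a.e., then $\psi_\gamma\neq 0$ for some $\gamma$, and one argues that in fact $f_\gamma = 0$ for \emph{every} $\gamma$ — for this one keeps $\psi$ fixed as the window and observes that the hypothesis already forces $f^x_\psi = 0$ a.e.\ in $x$ directly on the nonabelian group, so Lemma \ref{zero} on $\R^n\times K$ applies and gives $f = 0$ a.e.

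The main obstacle I expect is step (2)/(5): making precise the bookkeeping between the operator-valued Gabor transform on $\R^n\times K$ and the scalar Euclidean ones, because the window $\psi$ does not split as a product over $\widehat K$ in general, so the block-diagonalization must be done on the level of $G_\psi f$ itself rather than on $f$ and $\psi$ separately. The clean way around this is to not decompose $\psi$ at all: fix $x\in\R^n\times K$ and set $g = f^x_\psi = f\cdot\overline{\psi(x^{-1}\cdot)} \in L^2(\R^n\times K)$, note $\widehat{g}(\xi,\gamma) = G_\psi f(x,\xi,\gamma)$, so the hypothesis says (for a.e.\ $x$) that $g$ satisfies the Beurling integrability condition on $\R^n\times K$ with the Gaussian weight; then one is exactly in the situation of Lemma \ref{comp2} applied to $g$ (the weight $e^{\pi(\|x\|^2+\|\xi\|^2)/2}$ dominates $e^{2\pi|x\cdot\xi|}$ and the $\gamma$-sum is already present), yielding $g = f^x_\psi = 0$ a.e.\ for a.e.\ $x$, and Lemma \ref{zero} finishes. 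This routes everything through Lemma \ref{comp2} and Lemma \ref{zero} and avoids decomposing the window entirely.
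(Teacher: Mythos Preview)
Your ``clean way around'' has a genuine gap: you conflate the Gabor translation parameter $x$ (which you fix) with the running integration variable $y$ of the Beurling hypothesis. After fixing $(x,k)\in \R^n\times K$ and setting $g=f_\psi^{(x,k)}$, the hypothesis of Theorem~\ref{comp} yields only
\[
\int_{\R^n}\sum_{\gamma\in\widehat K}\|\widehat g(\xi,\gamma)\|_{\mathrm{HS}}\,e^{\pi\|\xi\|^2/2}\,d\xi<\infty;
\]
the factor $e^{\pi\|x\|^2/2}$ becomes a harmless constant once $x$ is fixed and tells you nothing about $g(y,s)$ in its own variable $y$. But Lemma~\ref{comp2} demands the joint integral $\int\!\!\int |g(y,s)|\,\|\widehat g(\xi,\gamma)\|_{\mathrm{HS}}\,e^{2\pi|y\cdot\xi|}\,dy\,ds\,d\xi<\infty$, which requires a weight growing in $y$. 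For general $f,\psi\in L^2$ (unlike Theorem~5.1, where $\psi\in C_c(G)$ supplied exactly this) there is no control over $\int|g(y,s)|\,e^{c\|y\|^2}\,dy\,ds$, so the reduction to Lemma~\ref{comp2} does not go through. Your first outline has the right objects (the matrix-coefficient scalarizations) but keeps reaching for Fourier--Beurling statements where the argument must stay at the Gabor level.

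The paper does not pass through Lemma~\ref{comp2} at all. It fixes a matrix coefficient of $\psi$ giving a nonzero scalar window $\tau\in L^2(\R^n)$, and for each $\omega\in\widehat K$ forms the scalarization $g\in L^2(\R^n)$ of $f$. The technical core is to dominate the \emph{scalar Gabor transform} $|G_\tau g(x,\xi)|$ by $\int_K\|G_\varphi f(x,k,\xi,\omega)\|_{\mathrm{HS}}\,dk$ for an auxiliary window $\varphi(x,k)=\psi(x,k)\overline{\langle\gamma(k)^*e_r^\gamma,e_s^\gamma\rangle}$, and then to bound $\|G_\varphi f(x,k,\xi,\sigma)\|_{\mathrm{HS}}$ by a finite sum $C_{\sigma,\gamma}\sum_{\delta\in K_\sigma}\|G_\psi f(x,k,\xi,\delta)\|_{\mathrm{HS}}$ via the Clebsch--Gordan decomposition $\gamma\otimes\sigma=\sum_{\delta\in K_\sigma}m_\delta\,\delta$. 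This preserves the full weight $e^{\pi(\|x\|^2+\|\xi\|^2)/2}$ and lands in the \emph{Gabor}--Beurling hypothesis on $\R^n$ (Theorem~\ref{abel}), giving $g=0$ for every $\omega$; Lemma~\ref{zeroall} then finishes. The tensor-product bookkeeping you flagged as the obstacle is exactly the content of the proof and is not circumvented by the shortcut you propose.
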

\begin{proof}
Assume that $\psi \neq 0$. For $\omega,\gamma \in \widehat{K}$, let $\ch_\omega$ and $\ch_\gamma$ be the Hilbert spaces of dimensions $d_\omega$ and $d_\gamma$ with orthonormal bases $\{e_{i}^{\omega}\}_{i=1}^{d_{\omega}}$ and $\{e_{i}^{\gamma}\}_{i=1}^{d_{\gamma}}$ respectively. \\
For fixed $e_{r}^{\gamma}, e_{s}^{\gamma}$, we define $\tau : \R^n \rightarrow \C$ by
\begin{flalign*}
\tau(x)=\int_{K}{\psi(x,k)\ \overline{\langle{\gamma(k)^\ast e_{r}^{\gamma},e_{s}^{\gamma}}\rangle}}\ dk.
\end{flalign*}
Using the H\"{o}lder's inequality, it follows that $\tau \in L^2(\R^n)$. By Lemma \ref{zeroall}, we fix $\gamma \in \widehat{K}$ for which $\tau \neq 0$. For $\sigma \in \widehat{K}$, we can write
 \begin{flalign}
&&\gamma(k)e_{r}^{\gamma}& =\sum_{j=1}^{d_\gamma}C_{j,r}^ke_{j}^{\gamma} \nonumber&\\
&\text{and}& \gamma \otimes \sigma &=\sum_{\delta \in K_\sigma}{m_\delta\ \delta}, \label{tensor}
\end{flalign}
$K_\sigma$ is a finite subset of $\widehat{K}$ and $C_{j,r}^k$'s, $m_\delta$'s are scalars (see \cite{Hew:Ros:63:70}). For fixed $e_{p}^{\omega}$ and $ e_{q}^{\omega}$, we define $g : \R^n \rightarrow \C$ such that
\begin{flalign*}
g(x)=\int_{K}{f(x,k)\ \ \overline{\langle{\omega(k)^\ast e_{p}^{\omega},e_{q}^{\omega}}\rangle}}\ dk.
\end{flalign*}
Clearly, $g \in L^2(\R^n)$. Consider a function $\varphi : \R^n \times K \rightarrow \C$ defined by
\begin{flalign*}
\varphi (x,k) = \psi(x,k)\ \overline{\langle{\gamma(k)^\ast e_{r}^{\gamma},e_{s}^{\gamma}}\rangle}.
\end{flalign*}
Then, $\varphi \in L^2(\R^n \times K)$ and $G_{\varphi}f(x,k,\xi,\sigma)$ is a Hilbert-Schmidt operator for all $(x,k)\in \R^n \times K$ and for almost all $(\xi,\sigma)\in \widehat{\R^n} \times \widehat{K}$. \\
For $\sigma \in \widehat{K}$ and fixed $e_{l}^{\sigma}, e_{m}^{\sigma}$, using  \cite{ash:18} we have 
\begin{flalign*}
&\langle{G_{\varphi}f(x,k,\xi,\sigma)e_{l}^{\sigma},e_{m}^{\sigma}}\rangle =\sum_{j=1}^{d_\gamma}\sum\limits_{\delta \in K_\sigma}{C_{j,r}^k\ m_\delta\ \langle{G_{\psi}f(x,k,\xi,\delta)e_{l,j}^{\delta},e_{m,s}^{\delta}}\rangle}.
\end{flalign*}
Let  $M_\sigma=\max{\{ |m_\delta| : \delta \in K_\sigma\}}$. As $|K_{\sigma}|\leq d_{\gamma}d_{\sigma} < \infty$, we have $M_{\sigma}<\infty.$ Using  Cauchy-Schwarz inequality, we have
\begin{flalign*}
\|G_{\varphi}f(x,k,\xi,\sigma)\|_{\text{HS}}^2 &=\sum_{l,m=1}^{d_\sigma}{|\langle{G_{\varphi}f(x,k,\xi,\sigma)e_{l}^{\sigma},e_{m}^{\sigma}}\rangle|^2} &\\
&\leq \sum_{l,m=1}^{d_\sigma}{\Big(\sum_{j=1}^{d_\gamma}\sum\limits_{\delta \in K_\sigma}{|C_{j,r}^k\ m_\delta\ \langle{G_{\psi}f(x,k,\xi,\delta)e_{l,j}^{\delta},e_{m,s}^{\delta}}\rangle|}\Big)^2} &\\
&\leq \sum_{l,m=1}^{d_\sigma}{M_{\sigma}^2\ |K_\sigma|\ d_\gamma\ \Big(\sum_{j=1}^{d_\gamma}\sum\limits_{\delta \in K_\sigma}{|\langle{G_{\psi}f(x,k,\xi,\delta)e_{l,j}^{\delta},e_{m,s}^{\delta}}\rangle|^2}\Big)} &\\
&\leq \sum_{l,m=1}^{d_\sigma}{M_{\sigma}^2\ |K_\sigma|\ d_\gamma\  \sum_{j=1}^{d_\gamma} \sum\limits_{\delta \in K_\sigma}\|G_{\psi}f(x,k,\xi,\delta)\|_{\text{HS}}^2} &\\*
&\leq  d_\sigma^2\ M_{\sigma}^2\  |K_\sigma|\ d_\gamma^2\ \Big( \sum\limits_{\delta \in K_\sigma}\  \|G_{\psi}f(x,k,\xi,\delta)\|_{\text{HS}}\Big)^2. &
\end{flalign*}
So, it follows that
\begin{flalign}
\|G_{\varphi}f(x,k,\xi,\sigma)\|_{\text{HS}} &\leq C_{\sigma,\gamma} \sum\limits_{\delta \in K_\sigma} \|G_{\psi}f(x,k,\xi,\delta)\|_{\text{HS}},  \label{step1:bdd}
\end{flalign}
where $C_{\sigma,\gamma} =   d_\sigma\ M_{\sigma}\ |K_\sigma|\ d_\gamma\  $ a constant depending on $\sigma$ and $\gamma$.
Now for  every $\sigma \in  \widehat{K}$, using (\ref{step1:bdd}), we obtain 
\begin{align}
&\int_{\mathbb{R}^n}\int_{K}\int_{\mathbb{R}^n}\|G_{\varphi}f(x,k,\xi,\sigma)\|_{\text{HS}}\  e^{\pi(\|x\|^2+\|\xi\|^2)/2}\ dx\ dk\ d\xi \nonumber \\
&\leq C_{\sigma,\gamma}\int_{\mathbb{R}^n}\int_{K}\int_{\mathbb{R}^n} \sum\limits_{\delta \in K_\sigma} \|G_{\psi}f(x,k,\xi,\delta)\|_{\text{HS}}\ e^{\pi(\|x\|^2+\|\xi\|^2)/2}\ dx\ dk\ d\xi<\infty. \label{in-norm}
\end{align}
For $x, \xi \in \mathbb{R}^n $, the function $G_{\tau}g$ is given by
\begin{flalign*}           
G_{\tau}g(x,\xi)
&=\int_{K}\langle G_{\varphi}f(x,k,\xi,\omega)e_{p}^{\omega}, e_{q}^{\omega}\rangle\ dk.
\end{flalign*}
Thus,
\begin{flalign*}
|G_{\tau}g(x,\xi)|&\leq \int_{K} \|G_{\varphi}f(x,k,\xi,\omega)\|_{\text{HS}}\ dk.\end{flalign*}
On using (\ref{in-norm}), it follows 
 \begin{flalign*}
&\int_{\mathbb{R}^n} \int_{\mathbb{R}^n}|G_{\tau}g(x,\xi)|e^{\pi(\|x\|^2+\|\xi\|^2)/2}\ dx\ d\xi\\
&\leq \int_{\mathbb{R}^n} \int_{\mathbb{R}^n} \int_{K} \|G_{\varphi}f(x,k,\xi,\omega)\|_{\text{HS}}\ e^{\pi(\|x\|^2+\|\xi\|^2)/2}\ dx \ d\xi\ dk < \infty.
\end{flalign*} 
Then by Beurling theorem for Gabor transform on $\R^n$ (see \cite{incollection}) or Theorem  \ref{abel}, we conclude that $g=0$ a.e. 
Since $\omega \in \widehat{K}$ is arbitrary, therefore using Lemma \ref{zeroall}, we get $f=0$ a.e.
\end{proof}
\section*{Acknowledgement}
\noindent The first author is supported by UGC under joint UGC-CSIR  Junior Research Fellowship (Ref. No:21/12/2014(ii)EU-V).\\

\bibliographystyle{amsplain}

\end{document}